\documentclass[12pt]{amsart}

\usepackage{enumitem}
\usepackage{psfrag}
\usepackage{graphicx}
\usepackage{pinlabel}
\usepackage{graphicx}
\usepackage{amsmath}
\usepackage{amssymb}
\usepackage{amscd}
\usepackage{picinpar}
\usepackage{times}
\usepackage{pb-diagram}
\usepackage{graphicx}
\usepackage{wrapfig}
\usepackage{xspace}
\usepackage{hyperref}
\usepackage{url}
\usepackage{caption}
\usepackage{subcaption}
\newtheorem{theorem}{Theorem}[section]
\newtheorem{lemma}[theorem]{Lemma}
\newtheorem{proposition}[theorem]{Proposition}
\newtheorem{corollary}[theorem]{Corollary}
\newtheorem{definition}[theorem]{Definition}

\newtheorem{remark}[theorem]{Remark}
\newtheorem*{theorem*}{Theorem}
\setlength{\tabcolsep}{1pt}
\textwidth 6.2in \textheight 9.0in \oddsidemargin 0.0in
\evensidemargin 0.0in \topmargin -0.5in
\addtolength{\textwidth}{1mm} \addtolength{\columnsep}{2mm}
\begin{document}

\title{The convergence of discrete uniformizations for genus zero surfaces}

\author{Yanwen Luo}
\address{Department of Mathematics, Rutgers University, Piscataway, NJ, 08854}
\email{yl1594@math.rutgers.edu}

\author{Tianqi Wu}
\address{Center of Mathematical Sciences and Applications, Harvard University, Cambridge, MA 02138}
\email{tianqi@cmsa.fas.harvard.edu}

\author{Xiaoping Zhu}
\address{Department of Mathematics, Rutgers University, Piscataway, NJ, 08854}
\email{xz349@math.rutgers.edu}



\begin{abstract}
The notion of discrete conformality proposed by Luo \cite{luo2004combinatorial} and Bobenko-Pinkall-Springborn \cite{bobenko2015discrete} on triangle meshes has rich mathematical
theories and wide applications. Gu et al. \cite{gu2019convergence}\cite{wu2020convergence} proved that the discrete uniformizations approximate the continuous uniformization for closed surfaces of genus $\geq 1$, given that the approximating triangle meshes are reasonably good. In this paper, we generalize this result to the remaining case of genus zero surfaces, by reducing it to planar cases via stereographic projections. 

\end{abstract}
\maketitle
\tableofcontents
\section{Introduction}

           This work is a continuation of \cite{wu2020convergence}, which studies the convergence of discrete uniformizations to the continuous uniformization for closed surfaces of genus greater than zero.  The notion of discrete conformality and discrete uniformization discussed here, which is called vertex scaling, were 
           introduced by Luo \cite{luo2004combinatorial} and Bobenko-Pinkall-Springborn \cite{bobenko2015discrete}. Gu-Luo-Wu \cite{gu2019convergence} first proved the convergence of discrete uniformizations for the unit disk and tori.   
 
  In this paper, we prove the convergence result for surfaces of genus zero. Specifically, for a reasonable geodesic triangulation $T$ on a Riemannian surface $(M, g)$, the discrete uniformization of its induced polyhedral metric approximates the uniformization for $(M, g)$ by an error in the order of the maximal edge length of edges in $T$.
 There has been extensive study about the convergence of discrete conformality since Rodin-Sullivan's seminal work \cite{rodin1987convergence} on the convergence of circle packings. Other convergence results about the vertex scaling can also be found in \cite{luo2020discrete}\cite{bucking2018c}\cite{bucking2016approximation}.

\subsection{Set up and the main theorem}
Suppose $M$ is a compact orientable surface, possibly with boundary, and $T$ is a triangulation of $M$, which is always assumed to be a simplicial complex such that its one-skeleton is a $4$-vertex-connected graph. Denote $V(T),E(T),F(T)$ as the sets of vertices, edges, and triangles of $T$ respectively. Further denote $\text{int}(T)$ as the set of interior vertices of $T$, and $\text{bdy}(T)$ as the set of boundary vertices of $T$. If $M$ is equipped with a smooth Riemannian metric $g$, $T$ is a \textit{geodesic triangulation} if any edge in $T$ is a shortest geodesic arc in $(M,g)$.

An \emph{admissible edge length function} $l\in\mathbb{R}_{>0}^{E(T)}$ of $T$ satisfies the triangle inequality $l_{ij}+l_{jk}>l_{ik}$ for any triangle $\triangle ijk\in F(T)$. 
For a geodesic triangulation $T$ on a Riemannian surface $(M,g)$, we can naturally define an edge length function $l$ using the geodesic lengths of the edges. 
Given an admissible edge length function $l$, we can construct a piecewise Euclidean triangle mesh $(T,l)_E$ by isometrically gluing the Euclidean triangles with the edge lengths defined by $l$ along pairs of edges. Similarly, a spherical triangle mesh $(T,l)_S$ can be constructed by replacing Euclidean triangles with spherical triangles of the same edge lengths, provided that $l_{ij}+l_{jk}+l_{ki}<\pi$ for any triangle $\triangle ijk$ in $F(T)$. For given $(T,l)_E$ or $(T,l)_S$, we often use $\theta^i_{jk}$ to denote the inner angle at $i$ in the triangle $\triangle ijk$. We also define the discrete curvature $K_i$ at a vertex $i\in V(T)$ as 

$$ K_i = \left\{ 
       \begin{array}{ll}
         2\pi-\sum_{jk\in E:ijk\in F}\theta^i_{jk}, & \mbox{ if $i\in \text{int}(T)$},\\
        \pi-\sum_{jk\in E:ijk\in F}\theta^i_{jk}, & \mbox{ if $i\in \text{bdy}(T)$}.      
        \end{array}
\right. $$
A Euclidean (\textit{resp.} spherical) triangle mesh is globally flat (\textit{resp.} globally spherical) if and only if $K_i=0$ for any $i\in \text{int}(T)$.

\begin{definition}
Given a triangulation $T$, a \emph{discrete conformal factor} $u$ is a real-valued function on $V(T)$. 
For the Euclidean case, $(T,l)_E$ and $(T,l')_E$ are called \emph{discrete conformal} if for some discrete conformal factor $u$, 
\begin{equation}
\label{Euclidean discrete conformal}
l'_{ij}=e^{\frac{1}{2}(u_i+u_j)}l_{ij}
\end{equation}
for any $ij\in E(T)$. For the spherical case, $(T,l)_S$ and $(T,l')_S$ are \emph{discrete conformal} if for some discrete conformal factor $u$, 
\begin{equation}
\label{Spherical discrete conformal}
\sin\frac{l'_{ij}}{2}=e^{\frac{1}{2}(u_i+u_j)}\sin\frac{l_{ij}}{2}
\end{equation}
for any $ij\in E(T)$.
\end{definition}
We denote $l'=u*l$ if equation (\ref{Euclidean discrete conformal}) holds, and $l'=u*_s l$ if equation (\ref{Spherical discrete conformal}) holds. Given a triangle mesh $(T, l)_E$ (\textit{resp.} $(T, l)_S$), let $\theta^i_{jk}(u)$ and $K_i(u)$ denote the corresponding inner angle at $i$ in triangle $\triangle ijk$ and the discrete curvature at $i$ respectively in $(T, u*l)_E$ (\textit{resp.} $(T, u*_s l)_S$).
If $(T,l)_S$ is a topological sphere, $u$ is called a \emph{discrete uniformization factor} for $(T,l)_S$, if
$(T,u*_sl)_S$ is isometric to the unit sphere, which is equivalent to that the discrete curvature $K(u):=[K_i(u)]_{i\in V}$ is zero. A triangle mesh $(T,l)_E$ (resp. $(T,l)_S$) is called \emph{strictly Delaunay} if for any edge $ij$ in T, two adjacent triangles $\triangle ijk,\triangle ijk'$ containing $ij$ satisfies
\begin{equation}
\label{delaunay}
\theta^k_{ij}+\theta^{k'}_{ij}<
\theta^i_{jk}+\theta^i_{jk'}+
\theta^j_{ik}+\theta^j_{ik'}.
\end{equation}

We quantify the regularity of a triangle mesh as follows.



\begin{definition}
A triangle mesh $(T,l)_E$ (resp. $(T,l)_S$) is called \emph{$\epsilon$-regular} if
\begin{enumerate}
	\item[(a)] any inner angle $\theta^i_{jk}\geq\epsilon$, and 
	\item[(b)] for any adjacent triangles $\triangle ijk$ and $\triangle ijk'$, $\theta^k_{ij}+\theta^{k'}_{ij}\leq\pi-\epsilon$.
\end{enumerate}
\end{definition}
Condition (a) requires that any triangle is away from degenerating, and condition (b) requires the triangle mesh to be uniformly strictly Delaunay.
Let $|x| = \max_{i\in I} |x_i|$ denote the maximal norm of a vector $x\in \mathbb{R}^I$ in a finite dimensional vector space.  Let $\hat{\mathbb{C}}$ be the standard Riemann sphere, which can be identified with the unit sphere $\mathbb{S}^2$ in $\mathbb{R}^3$ by the stereographic projection. The main theorem of the paper is 
\begin{theorem}
\label{Main theorem}
Suppose $(M,g)$ is a closed smooth Riemannian surface of genus zero with three marked points $X, Y, Z$, and $\bar u\in C^{\infty}(M)$ is the unique uniformization conformal factor such that $(M,e^{2\bar u}g)$ is 
isometric to the unit sphere $\mathbb{S}^2 \cong \hat{\mathbb C}$ through map $\phi$, where $\phi(Z)=0$, $\phi(Y)=1$, $\phi(X)=\infty$. Assume $T$ is a geodesic triangulation of $(M,g)$, and $l\in\mathbb{R}_{>0}^{E(T)}$ denotes its edge length in $(M, g)$. 
Then for any $\epsilon>0$, there exists $\delta=\delta(M,g,X,Y,Z,\epsilon)>0$ such that if $(T,l)_S$ is $\epsilon$-regular and $|l|\leq\delta$, then
\begin{enumerate}
	\item[(a)] there exists a unique discrete conformal factor $u$ on $V(T)$, such that $(T,u*_sl)_S$ is strictly Delaunay and isometric to the unit sphere through a map $\psi$ such that $\psi(Z) = 0$, $\psi(Y) = 1$, and $\psi(X) = \infty$, and
	
	\item[(b)]  $|u-\bar u|_{V(T)}|\leq C|l|$ for some constant $C=C(M,g,X,Y,Z,\epsilon)>0$.
\end{enumerate}
\end{theorem}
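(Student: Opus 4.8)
The plan is to transport the whole problem to the plane by stereographic projection from the marked point $X$, and then to feed the resulting planar configuration into the Euclidean convergence machinery of \cite{gu2019convergence} and \cite{wu2020convergence}. The bridge between the spherical and the Euclidean worlds is the elementary chordal-distance identity: if $p,q\in\mathbb{S}^2$ are mapped by stereographic projection to $z,w\in\mathbb{C}$, then
$$\sin\frac{l_{pq}}{2}=\frac{|z-w|}{\sqrt{(1+|z|^2)(1+|w|^2)}}=e^{\frac12(s_z+s_w)}|z-w|,\qquad s_z:=-\log(1+|z|^2).$$
Reading this against \eqref{Euclidean discrete conformal} and \eqref{Spherical discrete conformal}, the spherical relation $l'=u*_s l$ for a mesh whose vertices project to $\{z_i\}$ is exactly the Euclidean relation $|z_i'-z_j'|=e^{\frac12((u_i+s_i)+(u_j+s_j))}|z_i-z_j|$ for the associated planar mesh, i.e.\ spherical discrete conformality becomes Euclidean discrete conformality after the vertex shift $u\mapsto u+s$. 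Hence the requirement that $(T,u*_sl)_S$ be the round sphere with $\psi(X)=\infty$, $\psi(Y)=1$, $\psi(Z)=0$ translates into a planar Euclidean discrete uniformization problem for the mesh obtained by deleting $X$ (whose image is $\infty$), normalized so that the images of $Y,Z$ are $1,0$. Since the Delaunay inequality \eqref{delaunay} is M\"obius invariant and stereographic projection is the restriction of a M\"obius map, strict Delaunay-ness is preserved by the correspondence, and $\epsilon$-regularity is preserved up to a controlled constant on any compact part of the projected mesh.

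For part (a) I would use the convex variational principle for spherical discrete conformality (as in \cite{bobenko2015discrete}), in the form adapted to the Delaunay cell so that the energy is $C^2$ and convex. Its gradient is the curvature map $u\mapsto K(u)$ and its Hessian is a weighted discrete Laplacian whose weights are strictly positive precisely by condition (b) of $\epsilon$-regularity; the only degeneracy is the finite-dimensional M\"obius symmetry of $\mathbb{S}^2$. Fixing the three marked points $X,Y,Z$ removes exactly this symmetry, so minimizing the energy under that normalization produces a unique $u$ with $K(u)=0$. That the resulting mesh is strictly Delaunay follows a posteriori from the estimate in part (b): $u$ is $O(|l|)$-close to $\bar u$, whose induced mesh is uniformly strictly Delaunay by the geodesic $\epsilon$-regularity, so the solution stays in the interior of the Delaunay cone.

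For the estimate (b) I would run the standard linearization-and-stability scheme, either directly for the spherical curvature (both $u$ and $\bar u$ are bounded spherical conformal factors) or after the planar reduction. Set $v=(u-\bar u)|_{V(T)}$. Because $K(u)=0$, the mean value form of Taylor's theorem gives $K(\bar u)=-L_\xi v$, where $L_\xi:=\partial K/\partial u|_{\xi}$ is a weighted discrete Laplacian at an intermediate conformal factor $\xi$, with weights bounded above and below in terms of $\epsilon$; hence $v=-L_\xi^{-1}K(\bar u)$. Two inputs close the argument. First, a \emph{consistency} estimate: since $\bar u$ is the exact smooth uniformization factor, the smooth curvature equation holds identically, and a Taylor expansion of the inner angles on the geodesic $\epsilon$-regular triangulation shows that the $O(|l|^2)$ part of $K(\bar u)$ cancels against that equation, leaving $|K(\bar u)|\le C|l|^3$ in the area-normalized scaling. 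Second, a \emph{stability} estimate: after imposing the three normalization constraints, $L_\xi$ is uniformly invertible with $\|L_\xi^{-1}\|\le C|l|^{-2}$, which one obtains from a discrete maximum principle with a quadratic barrier, using only the lower bound on the weights and the $4$-vertex-connectedness of the one-skeleton. Combining the two yields $|v|\le C|l|$.

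The main obstacle is the marked point $X$, which stereographic projection sends to infinity: there the planar conformal factor $s_z$ is singular and the star of $X$ maps to an unbounded region, so neither the consistency expansion nor the elliptic estimate applies verbatim near $X$. I expect to control this by exploiting the M\"obius freedom furnished by the three marked points to keep the configuration in a fixed compact region, by carrying out the global existence and uniqueness in the spherical picture where all quantities stay bounded, and by treating the finitely many triangles of the star of $X$ separately with uniform bounds before patching them to the interior elliptic estimate. Guaranteeing that every constant in the stability step is independent of the mesh as $|l|\to0$, uniformly up to the puncture, is the crux of the whole argument.
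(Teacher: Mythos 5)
Your overall strategy --- projecting away from $X$ to turn the spherical problem into a planar Euclidean one, with the conformal-factor shift coming from the chordal-distance identity --- is exactly the route the paper takes (its $w_i=\log\frac{|p_N(i)|_2^2+1}{2}$ is your $s_i$ up to sign and normalization, and its Lemmas \ref{injectivity} and \ref{regular for stereographic condition} are your M\"obius-invariance and regularity-preservation claims). But two of your key steps have genuine gaps. First, for part (a) you invoke ``the convex variational principle for spherical discrete conformality (as in \cite{bobenko2015discrete})'': no such convex functional is available --- the spherical action there is precisely the non-convex case, which is why the paper passes to the equivalent Euclidean/polyhedral formulation (Theorem \ref{main2} via Proposition \ref{equivalence}), obtains existence by a continuity-method flow, and cites Springborn/Rivin only for uniqueness (Remark \ref{unique}). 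Second, and more seriously, your quantitative scheme for (b) does not close. The consistency bound $|K(\bar u)|\le C|l|^3$ is unjustified and false as stated: each angle error $\alpha^i_{jk}=\bar\theta^i_{jk}-\theta^i_{jk}(\bar u+w)$ is genuinely of size $O(l^2)$ and a vertex meets only $O(1)$ triangles, so the pointwise curvature error is $O(|l|^2)$; paired with your stability bound $\|L_\xi^{-1}\|\le C|l|^{-2}$ this yields only $|v|=O(1)$. What saves the argument in the paper is a structural, not pointwise, improvement: the cancellation $\alpha^i_{jk}+\alpha^j_{ik}+\alpha^k_{ij}=0$ lets one write $K(\bar u+w)=\operatorname{div}(x)$ with $|x_{ij}|=O(l_{ij}^2)$, and the discrete elliptic estimate (Lemma \ref{estimate for discrete elliptic operator}), which rests on the $C$-isoperimetric condition rather than on a barrier or maximum principle, gains exactly one power of $|l|$ for divergence-form data. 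Without identifying this divergence structure your linearization cannot reach the $O(|l|)$ rate.

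Two smaller points. Your plan to ``treat the star of $X$ separately'' leaves open how the solution is normalized on the link of $X$ so that the reconstructed polyhedron closes up at the north pole with edge lengths of the form $u*2\sin\frac{l}{2}$ on the edges $iX$; the paper resolves this by prescribing explicit Dirichlet data $u_i(1)=\log 2-2\log(2\sin\frac{l_{iX}}{2})-\bar u_X$ on those vertices, an $O(|l|^2)$ perturbation of $\bar u_i+w_i$ that is exactly what makes Lemma \ref{injectivity}(b) applicable in the reconstruction step. Also, your mean-value linearization $K(\bar u)=-L_\xi v$ presupposes that the whole segment from $\bar u$ to $u$ stays in the $\epsilon$-regular (hence uniformly Delaunay) region, an a priori bound you do not yet have and which you in fact defer to part (b); the paper's curvature flow avoids this circularity by an open--closed argument keeping $u(t)$ inside a compact regular set $\Omega$ for all $t\in[0,1]$.
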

\begin{remark}
\label{unique}
The uniqueness part of the theorem is already known as a consequence of Springborn's Theorem 10.5 in \cite{springborn2019ideal}, which is equivalent to Rivin’s earlier result on
hyperbolic polyhedral realization in \cite{rivin1994intrinsic}.
\end{remark}
\subsection{An equivalent formulation}
Springborn \cite{springborn2008conformal}\cite{springborn2019ideal} and Bobenko et al. \cite{bobenko2015discrete} proposed an another notion of discrete uniformization, which is for Euclidean triangle meshes that are homeomorphic to a sphere. We adapt their definitions as follows.

Let $\mathcal P$ be the set of the compact convex polyhedral surfaces $P$, satisfying that 

(a) $P$ is the boundary of the convex hull of a finite subset of $\mathbb S^2$, and 

(b) $0$ is strictly inside $P$, and

(c) each face of $P$ is a triangle. 

Given $P\in \mathcal P$, denote $V(P)$ as the set of its vertices, and $T_P$ as the natural triangulation of $P$ where each triangle is a face of $P$, and $l_P\in\mathbb R^{E(T_P)}$ as the edge length of $T_P$ on $P$. For a Euclidean triangle mesh $(T,l)_E$, which is a topological sphere, 
we say that $u$ is a \emph{discrete uniformization factor} of $(T,l)_{E}$ if $(T,u*l)_E$ is isometric to some $P\in\mathcal P$, through a map $\varphi$ such that $\varphi(T)=T_P$. 

We call a geodesic triangulation of the unit sphere \emph{strictly Delaunay} if the circumference circle of each triangle contains no other vertex. It is well-known that this spherical empty circle condition is equivalent to the condition that if for any edge $ij$ in T, two adjacent triangles $\triangle ijk,\triangle ijk'$ containing $ij$ satisfies
\begin{equation}
\theta^k_{ij}+\theta^{k'}_{ij}<
\theta^i_{jk}+\theta^i_{jk'}+
\theta^j_{ik}+\theta^j_{ik'}.
\end{equation}
If we consider this geodesic triangulation as a sphercial triangle mesh $(T,l)_{S}$ where 
$l(e)$ is  the geodesic arc length of edge $e$, then the above condition is just condition (\ref{delaunay}). \par 
The central projection $p:(x,y,z)\mapsto(x,y,z)/\sqrt{x^2+y^2+z^2}$ naturally gives rise to a bijection  between $\mathcal P$ and the set of strictly Delaunay triangulations of the unit sphere.
Here we always assume that a triangulation of $\mathbb S^2$ is a geodesic triangulation and each triangle is a proper subset of a hemisphere. For a vector $x\in\mathbb R^I$, we denote $\sin x$ as the vector in $\mathbb R^I$ such that
$(\sin x)_{i}=\sin (x_i)$.
\begin{proposition}
\label{equivalence}
Let $P\mapsto p(T_P)$ be a bijection from $\mathcal P$ to the set of strictly Delaunay triangulations of $\mathbb S^2$. Further we have that $l_P=2\sin\frac{l}{2}$ where $l$ denotes the geodesic edge lengths of $p(T_P)$ on $\mathbb S^2$.
\end{proposition}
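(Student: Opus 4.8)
The plan is to verify the two claims separately: that $p$ carries each $P\in\mathcal P$ to a strictly Delaunay triangulation and is a bijection, and then the edge-length identity, which is elementary once the vertex correspondence is clear.

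First I would check that $p$ restricts to a homeomorphism from each $P\in\mathcal P$ onto $\mathbb S^2$. Since $0$ lies strictly inside the convex body bounded by $P$, every ray from the origin meets $P$ in exactly one point, so $p|_P$ is a continuous bijection onto $\mathbb S^2$, hence a homeomorphism by compactness. The vertices $V(P)$ are extreme points of the convex hull of a finite subset of $\mathbb S^2$, so they lie on $\mathbb S^2$ and are fixed by $p$. Each edge $v_iv_j$ of $P$ is a chord lying in the $2$-plane $\mathrm{span}\{v_i,v_j\}$ through the origin, so its central projection is an arc of the great circle $\mathbb S^2\cap\mathrm{span}\{v_i,v_j\}$, i.e.\ a geodesic arc. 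Consequently each triangular face projects to a spherical geodesic triangle, and $p(T_P)$ is a geodesic triangulation of $\mathbb S^2$.

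The main step is to identify convexity of $P$ with the spherical empty-circle condition, and this is the step I expect to require the most care. For a face $\triangle v_iv_jv_k$ of $P$, the circumscribed circle of the projected spherical triangle is exactly $\mathbb S^2\cap H$, where $H$ is the affine plane through $v_i,v_j,v_k$; this small circle cuts $\mathbb S^2$ into two caps. Because $H$ supports the convex surface $P$ and $0$ lies on its interior side, every other vertex of $P$ lies strictly on the origin side of $H$, hence outside the cap containing the face. This is precisely the statement that no other vertex lies in the circumscribed circle, so $p(T_P)$ is Delaunay; strictness follows from condition (c), since a fourth vertex on the circle would be coplanar with $v_i,v_j,v_k$ and merge the triangle into a non-triangular face. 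To exhibit the inverse and conclude bijectivity, I would run this argument in reverse: given a strictly Delaunay triangulation with vertex set $S\subset\mathbb S^2$, set $P=\partial(\mathrm{conv}\,S)$. The empty-circle condition guarantees that for each triangle the plane through its vertices supports $\mathrm{conv}\,S$ with all other vertices strictly interior, so these triangles are exactly the faces of the convex hull, and strictness forbids non-triangular faces, yielding condition (c). Since the triangulation covers $\mathbb S^2$ by triangles each contained in an open hemisphere, $S$ lies in no closed hemisphere, forcing $0$ strictly interior (condition (b)); condition (a) is immediate. The assignments $P\mapsto p(T_P)$ and $S\mapsto\partial(\mathrm{conv}\,S)$ are then mutually inverse.

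Finally, the edge-length identity is immediate. An edge $v_iv_j$ of $P$ has Euclidean length equal to the chord $|v_i-v_j|$, while its projection has geodesic length $l$ equal to the angle between the unit vectors $v_i$ and $v_j$. From $|v_i-v_j|^2=2-2\cos l=4\sin^2\tfrac{l}{2}$ we obtain $l_P=|v_i-v_j|=2\sin\tfrac{l}{2}$, as claimed.
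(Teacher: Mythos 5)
Your proof is correct, and the forward direction (convexity of $P$ implies the empty-circle property via the circumcircle $\mathbb S^2\cap H$ and the position of $0$ relative to the face plane $H$) is essentially the paper's argument. Where you genuinely diverge is the converse. The paper builds $P$ as a union of flat triangles, checks that the strict Delaunay condition forces every dihedral angle to be less than $\pi$, and then invokes a local-to-global convexity lemma (Lemma 6.1 of \cite{devadoss2011discrete}) to conclude that $P$ is convex. You instead argue globally from the start: for each Delaunay triangle the empty-circle condition places all remaining vertices strictly on one side of its affine plane, so that plane is a supporting plane of $\mathrm{conv}(S)$ and the triangle is a face of the hull; since these faces already project onto all of $\mathbb S^2$, they exhaust the boundary. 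This avoids the cited local-to-global lemma entirely, at the cost of having to establish first that $0$ is interior to $\mathrm{conv}(S)$ (your hemisphere argument, which does rely on the paper's standing assumption that each spherical triangle is a proper subset of a hemisphere, so that geodesic triangles with vertices in a closed hemisphere stay in it). A side benefit of your route is that it also supplies explicit verifications of conditions (b) and (c), which the paper dismisses as ``not hard to see.'' The edge-length identity is handled identically in both.
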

\begin{proof}
Given $P\in\mathcal P$, for any triangle $\triangle ijk$, other vertices are on one side of the plane $\triangle ijk$ lies in by the convexity of $P$. Therefore other vertices lies outside the circumference circle of spherical triangle $p(\triangle ijk)$. So $p(T_P)$ is a strictly Delaunay triangulation of $\mathbb S^2$. See Figure \ref{Equivalence between Delaunay and convexity} for illustrations.

If $T$ is a strictly Delaunay triangulation of $\mathbb S^2$, we construct a polyhedral surface $P$ as the union of all flat triangles $\triangle ijk$ where $i,j,k$ are the three vertices of a triangle in $T$. Then $p|_P$ is a homeomorphism from $P$ to $\mathbb S^2$. Since $T$ satisfies the empty circle property, we conclude that the dihedral angle on any edge $ij\in E(T)$ is less than $\pi$ by the similar argument in the above paragraph. So $P$ is a convex polyhedral surface (See Lemma 6.1 in  \cite{devadoss2011discrete} for example). 

Since all the vertices of $P$ is on the unit sphere, therefore $P$ satisfies the condition $(a)$ of set $\mathcal{P}$. It is not hard to see that $P$ satisfies condition $(b)$ and $(c)$. The condition $l_P=2\sin\frac{l}{2}$ follows easily from the construction of $P$.
\end{proof}

\begin{figure}[h]
	 	\centering
	 \begin{subfigure}[h]{0.48\textwidth}
	 	 	\centering
	 	\includegraphics[width=0.81\textwidth]{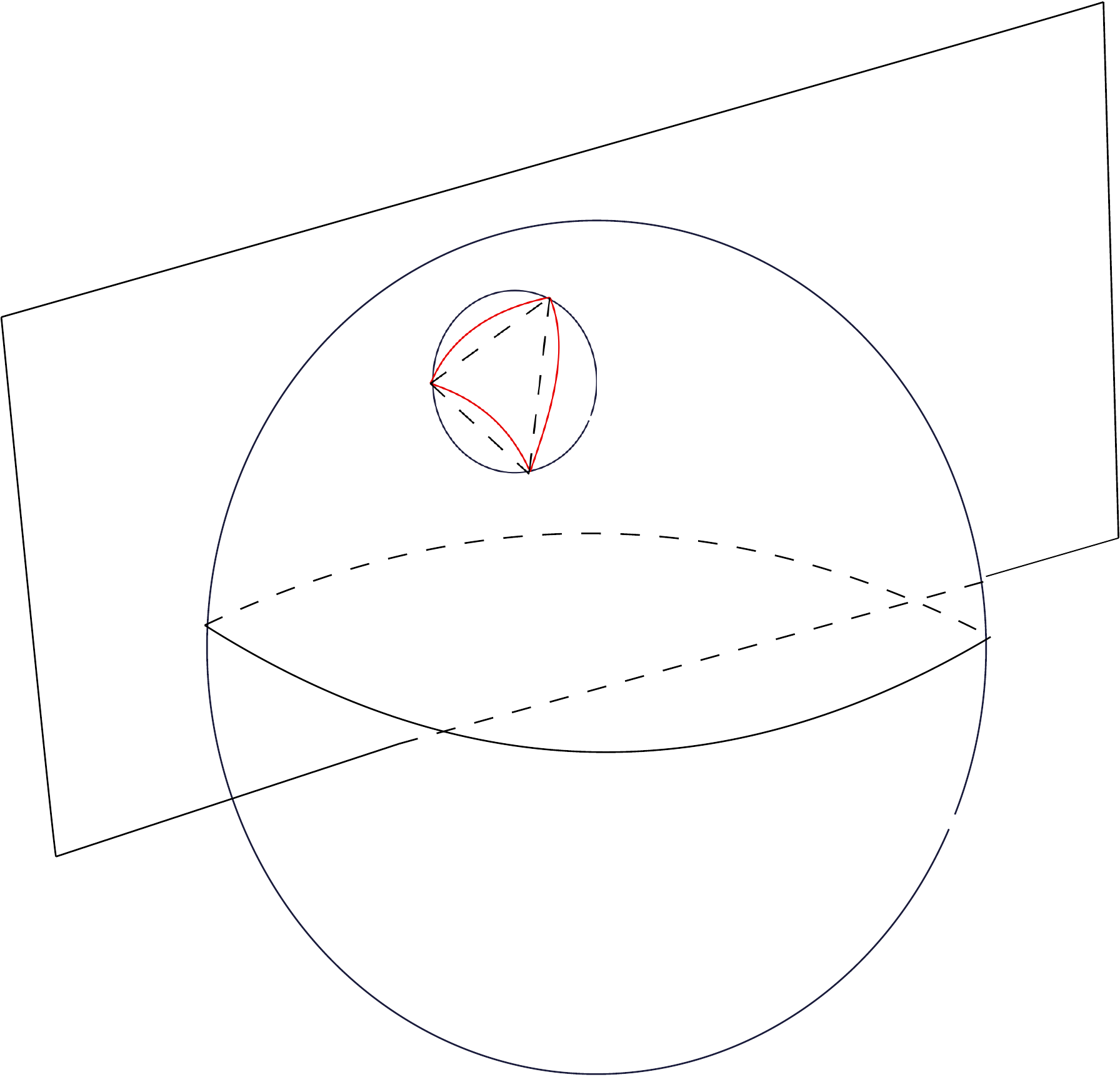}
	 \end{subfigure}
 \hspace{0.1cm}
    \begin{subfigure}[h]{0.45\textwidth}
	\centering
	\includegraphics[width=0.7\textwidth]{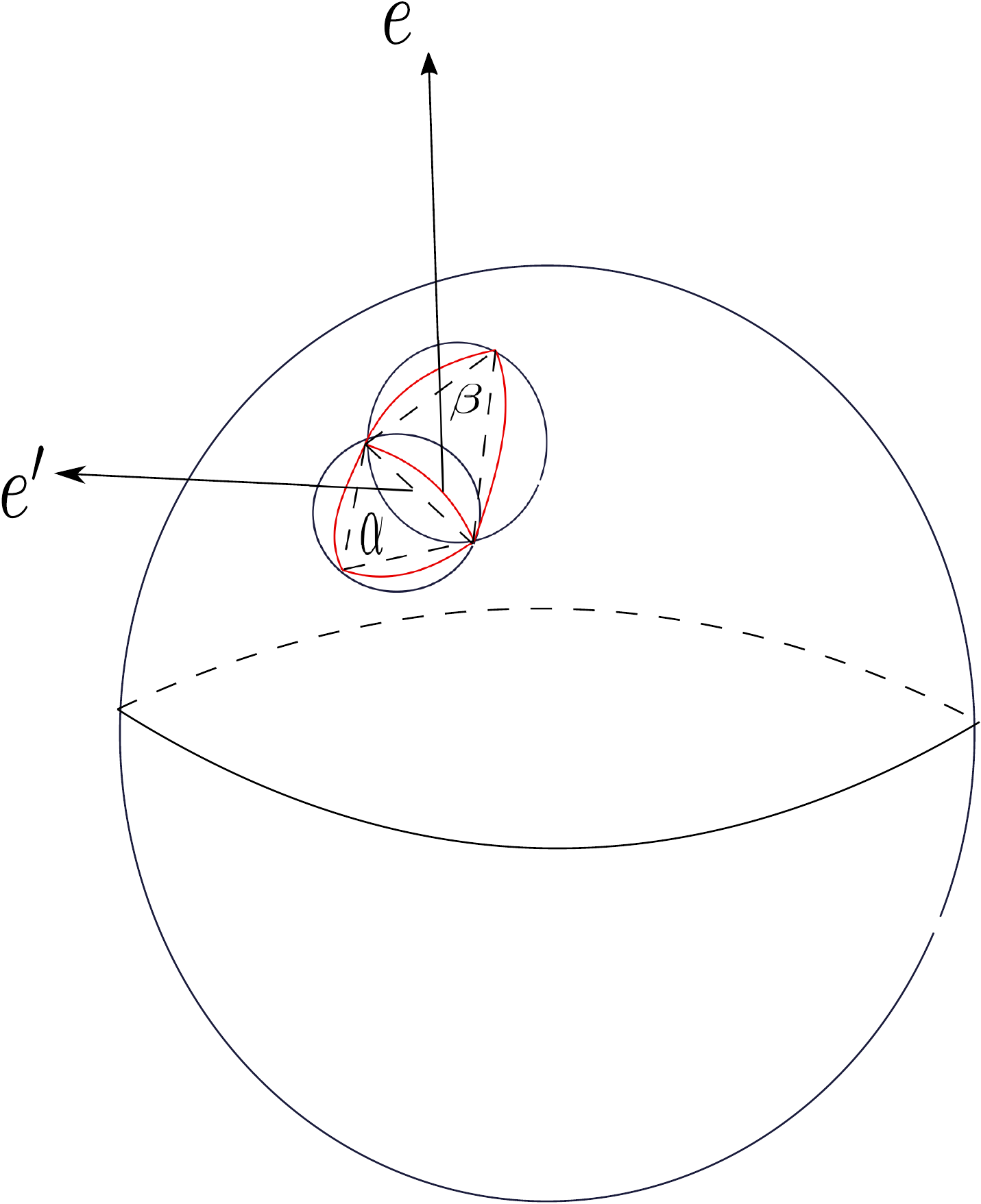}
    \end{subfigure}
    \caption{Equivalence between local Delaunay condition and local convexity.}
    \label{Equivalence between Delaunay and convexity}

\end{figure}

As a consequence, we obtain an equivalence between the two notions of discrete uniformizations. 
\begin{corollary}
Assume $T$ is topologically a sphere, then
    $u$ is a discrete uniformization factor of $(T,2\sin\frac{l}{2})_E$ if and only if $u$ is a discrete uniformization factor of $(T,l)_S$ and $(T,u*_sl)_S$ is strictly Delaunay.
\end{corollary}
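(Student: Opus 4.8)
The plan is to reduce the corollary to Proposition \ref{equivalence} via a single algebraic identity showing that the substitution $l\mapsto 2\sin\frac{l}{2}$ intertwines the spherical and Euclidean conformal operations. Concretely, for any conformal factor $u$ and any spherical length $l$, I would first observe that the spherical relation $\sin\frac{(u*_s l)_{ij}}{2}=e^{\frac{1}{2}(u_i+u_j)}\sin\frac{l_{ij}}{2}$ becomes, after multiplying by $2$,
\[
2\sin\frac{(u*_s l)_{ij}}{2}=e^{\frac{1}{2}(u_i+u_j)}\cdot 2\sin\frac{l_{ij}}{2}=\left(u*\Big(2\sin\tfrac{l}{2}\Big)\right)_{ij},
\]
so that $2\sin\frac{u*_s l}{2}=u*(2\sin\frac{l}{2})$. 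In particular, if one side is well defined (admissible) then so is the other, and the two edge-length functions correspond to each other under $2\sin\frac{\cdot}{2}$.

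For the forward direction, suppose $u$ is a discrete uniformization factor of $(T,2\sin\frac{l}{2})_E$. Then $(T,u*(2\sin\frac{l}{2}))_E$ is isometric to some $P\in\mathcal P$ with $\varphi(T)=T_P$, so its edge lengths equal $l_P$. By Proposition \ref{equivalence}, $p(T_P)$ is a strictly Delaunay triangulation of $\mathbb S^2$ whose geodesic edge length $l^S$ satisfies $l_P=2\sin\frac{l^S}{2}$. Combining this with the intertwining identity gives $2\sin\frac{l^S}{2}=u*(2\sin\frac{l}{2})=2\sin\frac{u*_s l}{2}$, and since all arcs lie in $(0,\pi)$ one may invert $2\sin\frac{\cdot}{2}$ to conclude $l^S=u*_s l$. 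Hence $(T,u*_s l)_S$ is the sphere triangulation $p(T_P)$, which is isometric to the unit sphere and strictly Delaunay; that is, $u$ is a discrete uniformization factor of $(T,l)_S$ with $(T,u*_s l)_S$ strictly Delaunay.

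The reverse direction is the same chain read backwards. If $u$ is a discrete uniformization factor of $(T,l)_S$ and $(T,u*_s l)_S$ is strictly Delaunay, then $(T,u*_s l)_S$ is a strictly Delaunay triangulation of $\mathbb S^2$; applying the bijection of Proposition \ref{equivalence} produces $P\in\mathcal P$ with $l_P=2\sin\frac{u*_s l}{2}=u*(2\sin\frac{l}{2})$, so $(T,u*(2\sin\frac{l}{2}))_E$ is isometric to $P$ through the induced simplicial map, making $u$ a discrete uniformization factor of $(T,2\sin\frac{l}{2})_E$.

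I do not expect a genuine obstacle here; the substantive content is entirely in Proposition \ref{equivalence}. The only points requiring care are bookkeeping: checking that the conformally changed lengths are admissible on both sides (which is automatic once the arcs are known to lie in $(0,\pi)$, since then $2\sin\frac{\cdot}{2}$ is an increasing bijection onto $(0,2)$ and can be inverted), and verifying that the simplicial identification $\varphi(T)=T_P$ on the Euclidean side corresponds exactly to the geodesic triangulation $p(T_P)$ on the spherical side under central projection.
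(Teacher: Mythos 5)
Your proposal is correct and matches the paper's intent exactly: the paper states this corollary without proof as an immediate consequence of Proposition \ref{equivalence}, and the intertwining identity $2\sin\frac{u*_s l}{2}=u*(2\sin\frac{l}{2})$ together with the bijection $P\mapsto p(T_P)$ is precisely the argument being invoked. Your care about inverting $2\sin\frac{\cdot}{2}$ on arcs in $(0,\pi)$ (guaranteed since each spherical triangle lies in a hemisphere) is the right bookkeeping.
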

By such an equivalence, we can reformulate our main theorem as follows.
\begin{theorem}
\label{main2}
Suppose $(M,g)$ is a closed smooth Riemannian surface of genus zero with three marked points $X, Y, Z$, and $\bar u\in C^{\infty}(M)$ is the unique uniformization conformal factor such that $(M,e^{2\bar u}g)$ is 
isometric to the unit sphere $\mathbb{S}^2 \cong \hat{\mathbb C}$ through map $\phi$, and $\phi(Z)=0$, $\phi(Y)=1$, $\phi(X)=\infty$. Assume $T$ is a geodesic triangulation of $(M,g)$, and $l\in\mathbb{R}_{>0}^{E(T)}$ denotes its edge length in $(M, g)$. 
Then for any $\epsilon>0$, there exists $\delta=\delta(M,g,X,Y,Z,\epsilon)>0$ such that if $(T,l)_S$ is $\epsilon$-regular and $|l|\leq\delta$, then
\begin{enumerate}
	\item[(a)] there exists a unique discrete conformal factor $u$ on $V(T)$, such that $(T,u*(2\sin\frac{l}{2}))_E$ is isometric to some $P\in\mathcal P$ through a map $\psi$ such that $\psi(Z) = 0$, $\psi(Y) = 1$, and $\psi(X) = \infty$, and
	
	\item[(b)]  $|u-\bar u|_{V(T)}|\leq C|l|$ for some constant $C=C(M,g,X,Y,Z,\epsilon)>0$.
\end{enumerate}
\end{theorem}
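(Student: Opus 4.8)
The plan is to derive both assertions from the known discrete and continuous uniformization theory together with the planar convergence estimates of \cite{gu2019convergence} and \cite{wu2020convergence}, transporting everything between the sphere and the plane by stereographic projection. For part (a), the existence of an inscribed convex realization $P\in\mathcal P$ in the discrete conformal class of $(T,2\sin\frac{l}{2})_E$ is precisely the discrete uniformization theorem for the sphere, i.e.\ Springborn's Theorem 10.5 in \cite{springborn2019ideal} (equivalently Rivin \cite{rivin1994intrinsic}); this realization is unique up to the M\"obius action of $PSL(2,\mathbb C)$ on $\mathbb S^2\cong\hat{\mathbb C}$. Post-composing the central projection with the unique M\"obius transformation carrying the three images of $X,Y,Z$ to $\infty,1,0$ normalizes $\psi$, and since the $1$-skeleton of $T$ is connected and contains triangles, $u$ is determined by the scaled lengths $u*(2\sin\frac{l}{2})$; this yields the uniqueness. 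I would defer strict Delaunayness to the end: for $|l|$ small it follows from (b), because the uniformized mesh is then $C|l|$-close to the round-sphere triangulation $\phi(T)$ and hence inherits a definite Delaunay margin from condition (b) of $\epsilon$-regularity.

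The core is the estimate in (b), and the first step is to set up the stereographic correspondence. Fix the projection for which $X$ is the north pole, so that $\phi\colon M\setminus\{X\}\to\mathbb C$ is a conformal chart with $\phi(Z)=0,\phi(Y)=1$; under it the round metric becomes $\tfrac{4}{(1+|z|^2)^2}|dz|^2$, so the flat pull-back $\phi^*(|dz|^2)$ equals $e^{2\bar v}g$ with
\[
\bar v=\bar u+\log\tfrac{1+|\phi|^2}{2}.
\]
On the discrete side, write $z_i=\psi(i)\in\mathbb C$ for $i\neq X$. The chordal-distance identity underlying Proposition \ref{equivalence} gives $(u*(2\sin\frac{l}{2}))_{ij}=\tfrac{2|z_i-z_j|}{\sqrt{(1+|z_i|^2)(1+|z_j|^2)}}$, equivalently
\[
|z_i-z_j|=e^{\frac12(\hat u_i+\hat u_j)}\,2\sin\tfrac{l_{ij}}{2},\qquad \hat u_i:=u_i+\log\tfrac{1+|z_i|^2}{2}.
\]
Thus realizing $P$ with $\psi(X)=\infty$ is the same as producing a planar Delaunay triangulation of the finite part of $T$ with vertices $z_i$ that is discretely conformal to $(T,2\sin\frac{l}{2})_E$ through $\hat u$, the star of $X$ becoming the unbounded complementary region; and $\hat u,\,z_i$ are the exact discrete analogues of the continuous quantities $\bar v,\,\phi(i)$.

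It therefore suffices to prove the planar convergence statements $|z_i-\phi(i)|\lesssim|l|\,(1+|\phi(i)|)$ and $|\hat u-\bar v|_{V(T)\setminus\{X\}}|\le C|l|$, after which the defining relations above, together with the elementary bound $|\log\tfrac{1+|z_i|^2}{2}-\log\tfrac{1+|\phi(i)|^2}{2}|\lesssim |z_i-\phi(i)|/(1+|\phi(i)|)$, cancel the common (and, near $X$, large) stereographic term and give $|u-\bar u|\le C|l|$. These planar statements are of exactly the type established for planar domains in \cite{gu2019convergence} and \cite{wu2020convergence}: one shows that the smooth factor restricted to $V(T)$ produces discrete curvature of size $O(|l|)$ (a consistency estimate between the piecewise-Euclidean metric $2\sin\frac{l}{2}$ and the smooth conformal metric, using $2\sin\frac{l}{2}=l(1-O(|l|^2))$), linearizes the curvature map $\hat u\mapsto K(\hat u)$, and inverts the resulting discrete Laplacian, whose cotangent weights are uniformly bounded above and below by $\epsilon$-regularity; the normalization $\phi(Z)=0,\phi(Y)=1$ together with a discrete Poincar\'e inequality bounds $\|L^{-1}\|$ and removes the conformal freedom.

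The principal obstacle is that $X$ sits at infinity, so the planar problem lives on the \emph{non-compact} chart $M\setminus\{X\}\cong\mathbb C$ rather than on the compact planar domains treated in the prequel. I would have to (i) control the geometry and the invertibility of the discrete Laplacian uniformly on a neighborhood of $X$, where both the positions $z_i$ and the factor $\log\tfrac{1+|z|^2}{2}$ blow up, and (ii) upgrade the position estimate to the scale-invariant form $|z_i-\phi(i)|\lesssim|l|\,(1+|\phi(i)|)$, since only in that form does subtracting the large stereographic term leave an $O(|l|)$ error in $u$. Securing these uniform bounds near the apex $X$ — equivalently, handling the unbounded star of $X$ and the behavior along the link $\mathrm{lk}(X)$ — is the step where the genus-zero argument genuinely departs from the higher-genus one, and is the part I expect to require the most care.
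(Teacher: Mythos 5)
Your overall strategy --- reduce to a planar problem by stereographic projection from $X$, transport the smooth factor to $\bar v=\bar u+\log\frac{1+|\phi|^2}{2}$, and run the planar consistency-plus-elliptic-estimate machinery of the prequel --- is exactly the route the paper takes, and your algebra relating $u$, $\hat u$, $z_i$ and the cancellation of the stereographic term matches the paper's final step. However, there is a genuine gap: the one step you yourself flag as ``requiring the most care,'' namely controlling the problem near $X$, where the planar positions have modulus of order $1/|l|$ and the correction $\log\frac{1+|z|^2}{2}$ is of size $\log(1/|l|)$, is precisely the new content of the genus-zero case, and you do not supply it. Without it the compact-domain estimates of \cite{gu2019convergence} and \cite{wu2020convergence} do not apply: the polygon $Q=p_N(\mathring P)$ has diameter of order $1/|l|$, the target factor $\bar v$ is unbounded as $|l|\to 0$, and a discrete Poincar\'e inequality normalized at $Y,Z$ gives no uniform control at the link of $X$.

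The paper's resolution is concrete and worth comparing against your sketch. It deletes the open star of $X$ entirely and works on the compact polygon $Q$; at the link of $X$ it does not solve a curvature condition at all but \emph{prescribes} Dirichlet data $u_i(1)=\log 2-2\log(2\sin\frac{l_{iX}}{2})-\bar u_X$, chosen so that the inverse stereographic reconstruction (Lemma \ref{injectivity}(b)) automatically returns the correct edge lengths on the edges $iX$; it checks that these prescribed values differ from the initial data $\bar u_i+w_i$ by only $O(l_{iX}^2)$; and it proves a modified isoperimetric inequality (Lemma \ref{isoperimetric2}) for the graph with $\mathrm{star}(X)$ removed, so that the discrete elliptic estimate (Lemma \ref{estimate for discrete elliptic operator}) applies to the resulting mixed Dirichlet/curvature problem. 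The scale-invariant position estimate you correctly identify as necessary, $\left|\log\bigl(|g(i)|_2/|p_N(\phi(i))|_2\bigr)\right|=O(|l|)$, is obtained from the bi-Lipschitz bound on the piecewise-linear comparison map (Lemma \ref{lip}), not from an additive bound. Two smaller points: invoking Springborn \cite{springborn2019ideal} for existence and deferring Delaunayness to the end is legitimate in principle, but the paper obtains existence constructively via a curvature flow, and strict Delaunayness must be maintained as an open condition \emph{along} that flow --- it is what keeps the cotangent weights bounded below in equation (\ref{eta}) --- so within the paper's argument it cannot be recovered only a posteriori; and the invertibility of the linearized operator comes from the Dirichlet coupling at the link of $X$ (the diagonal matrix $D$), not from the normalization at $Y$ and $Z$, which enters only in the final rescaling by $d_{YZ}$.
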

We will prove this new version of our main theorem. By the stereographic projection, we can consider triangulations of a flat polygon, instead of the polyhedrons inscribed in the unit sphere.
To obtain a satisfactory flat triangle mesh, we adapt the idea in \cite{wu2020convergence} and construct a discrete curvature flow on the triangle mesh. The estimate in part (b) essentially follows from a discrete elliptic estimate on the flow.

\subsection{Organization of the paper}
In Section 2, we will introduce the discrete calculus on graphs and the key discrete elliptic estimate. In Section 3 we will discuss the stereographic projection and the one-to-one correspondence between the convex polyhedral surfaces inscribed in the unit sphere and the the Delaunay triangulations of convex polygons. In Section 4, we will introduce some useful and elementary estimates about triangulations.
The proof of the main theorem \ref{main2} will be given in Section 5. 
\subsection{Acknowledgement} The work is supported in part by NSF 1719582,  NSF 1760471, NSF 1760527, NSF DMS 1737876, and NSF 1811878.

\section{Discrete Calculus on Graphs}

Assume $G=(V,E)$ is an undirected connected simple graph, on which we will frequently consider vectors in
$\mathbb R^V$, $\mathbb R^E$ and $\mathbb R^E_A$. 
Here $\mathbb R^E$ and $\mathbb R^E_A$ are both vector spaces of dimension $|E|$ such that
\begin{enumerate}
	\item[(a)] a vector $x$ in $\mathbb R^E$ is represented symmetrically, i.e., $x_{ij}=x_{ji}$, and
	\item[(b)] a vector $x$ in $\mathbb R^E_A$ is represented anti-symmetrically, i.e., $x_{ij}=-x_{ji}$.
\end{enumerate}
A vector $x$ in $\mathbb R^E_A$ is also called a \emph{flow} on $G$. An \emph{edge weight} $\eta$ on $G$ is a vector in $\mathbb R^E$. Given an edge weight $\eta$ and a vector $x\in\mathbb R^V$, its \emph{gradient} $\nabla x=\nabla_\eta x$ is a flow in $\mathbb R^E_A$ defined as 
$$
(\nabla x)_{ij}=\eta_{ij}(x_j-x_i).
$$
Given a flow $x\in\mathbb R^E_A$, its \emph{divergence} $div(x)$ is a vector in $\mathbb R^V$ defined as 
$$
div(x)_i=\sum_{j\sim i}x_{ij}.
$$
Given an edge weight $\eta$, the associated \emph{Laplacian} $\Delta = \Delta_\eta : \mathbb{R}^V \to \mathbb{R}^V$ is defined as $\Delta x=\Delta_\eta x=div(\nabla_\eta x)$, i.e., 
$$
(\Delta x)_i=\sum_{j\sim i}(\nabla x)_{ij}=\sum_{j\sim i}\eta_{ij}(x_j-x_i).
$$
The Laplacian is a linear transformation on $\mathbb R^V$, and could be identified as a symmetric $|V|\times |V|$ matrix. 

\subsection{Isoperimetric conditions and a discrete elliptic estimate}
Analogous to the isoperimetric condition on a Riemannian surface, we introduce the notion of $C$-isoperimetric conditions for a graph $G=(V,E)$ with a positive vector $l\in \mathbb{R}^{E}_{>0}$. 
Given $V_0\subset V$, we denote 
$$
\partial V_0=\{ij\in E:i\in V_0,\text{ and }j\notin V_0\}
$$ 
(See Figure \ref{isoperimetric figure} below),
and define the \textit{l-perimeter} of $V_0$ and the \textit{l-area} of $V_0$ as 
$$
|\partial V_0|_l = \sum_{ij\in \partial V_0} l_{ij}, \quad
\text{and}\quad|V_0|_l=\sum_{ij\in E:i,j\in V_0}l_{ij}^2.
$$
Given a constant $C>0$, such a pair $(G,l)$ is called $C$-\emph{isoperimetric} if for any $V_0\subset V$,
$$
\min\{|V_0|_l, |V|_l - |V_0|_l \}\leq C|\partial V_0|_l^2.
$$
\begin{figure}[h]
\centering
\includegraphics[width=0.6\textwidth]{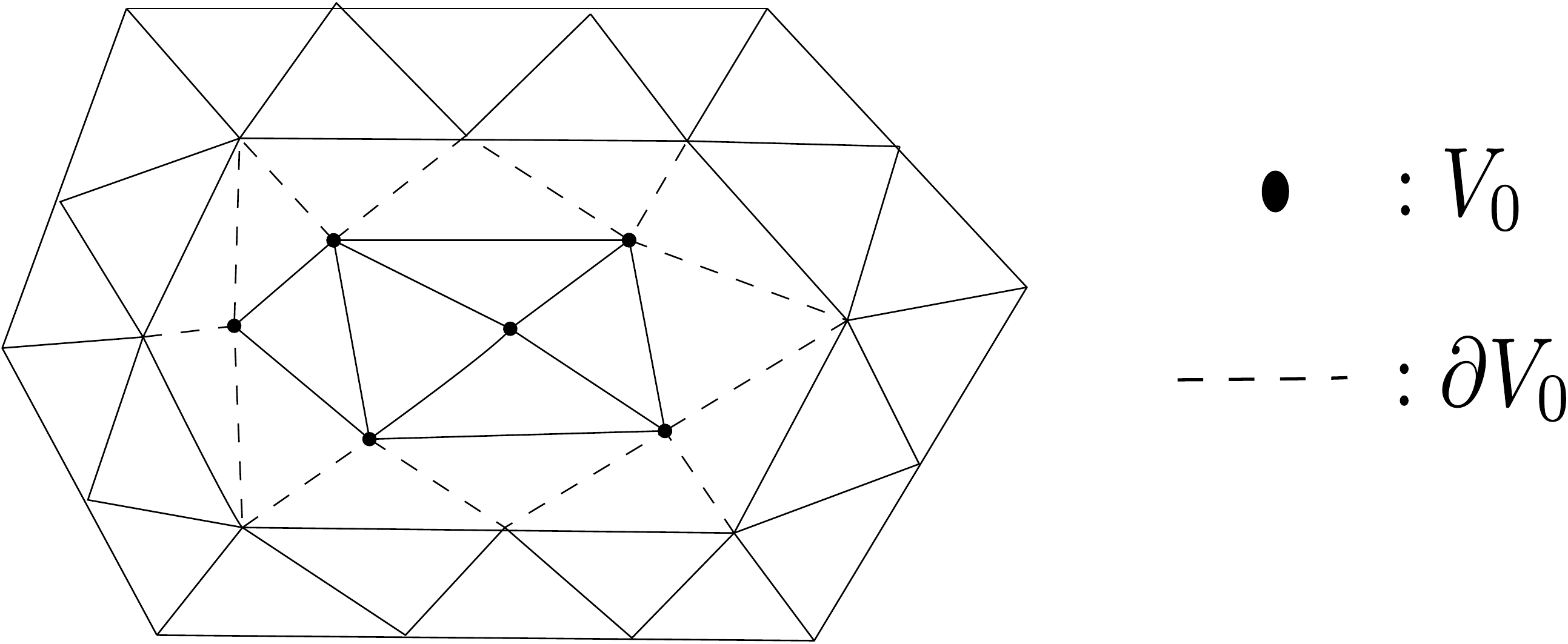}
\caption{The $C$-isoperimetric condition on graphs.}
\label{isoperimetric figure}
\end{figure}

We will see in Section 4 that a uniform $C$-\emph{isoperimetric} condition is satisfied by $\epsilon-$ regular triangle meshes approximating a closed smooth surface. The following discrete elliptic estimate is a key tool to prove our theorem of convergence, and is reformulated from the second conclusion of Lemma 2.3 in \cite{wu2020convergence}. 

\begin{lemma}
	\label{estimate for discrete elliptic operator}
	Given a constant $C>0$ and a $C$-isoperimetric pair $(G,l)$, consider the equation 
	\begin{equation}
	\label{discrete elliptic equation}
	(D-\Delta_\eta)u=div(x)+y
	\end{equation}
	on $G$ where
	\begin{enumerate}
	    \item [(i)] $\eta\in\mathbb R^E$ is an edge weight such that
	     for any $ij\in E$ 
	     $$
	     \eta_{ij}\geq \frac{1}{C},
	     $$ 
	     and
	    
	    \item [(ii)] $x\in\mathbb R^E_A$ is a flow such that for any $ij\in E$ 
	    $$
	    |x_{ij}|\leq Cl_{ij}^2,
	    $$ 
	    and
	    
	    \item [(iii)] $D\in\mathbb R^{V\times V}$ is a nonzero nonnegative diagonal matrix, and 
	    
	    \item [(iv)] $y\in\mathbb R^V$ satisfies that for any $i\in V$
	    $$
	    |y_i|\leq C \cdot D_{ii}|l|\cdot|V|_l^{\frac{1}{2}}.
	    $$
	\end{enumerate}
Then the solution $u\in\mathbb R^V$ of equation (\ref{discrete elliptic equation}) satisfies that
\begin{equation*}
	|u|\leq C'|l|\cdot|V|_{l}^{\frac{1}{2}}.
\end{equation*}
for some constant $C'=C'(C)>0$.
\end{lemma}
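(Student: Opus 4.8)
The plan is to derive an $L^\infty$ bound on the solution $u$ from an $L^2$-type energy estimate, using the $C$-isoperimetric condition to convert the $L^2$ bound into a pointwise bound via a discrete Sobolev/Nash-type inequality. The estimate is stated as a reformulation of Lemma 2.3 in \cite{wu2020convergence}, so I expect the core mechanism to be the standard Moser iteration or a direct energy argument adapted to the discrete setting; the role of the isoperimetric hypothesis is precisely to supply the functional inequality that powers such an argument.

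First I would pair equation (\ref{discrete elliptic equation}) with $u$ itself in the $\ell^2(V)$ inner product. Using the discrete integration-by-parts identity $\langle \Delta_\eta u, u\rangle = -\sum_{ij\in E}\eta_{ij}(u_j-u_i)^2$ (summation-by-parts for the graph Laplacian) and the analogous identity $\langle \operatorname{div}(x), u\rangle = -\sum_{ij\in E} x_{ij}(u_j - u_i)$, the left side produces a positive Dirichlet energy term $\sum_{ij} \eta_{ij}(u_j-u_i)^2$ together with the nonnegative term $\langle Du, u\rangle = \sum_i D_{ii} u_i^2$, while the right side becomes $-\sum_{ij} x_{ij}(u_j-u_i) + \sum_i y_i u_i$. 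Hypothesis (i) gives $\eta_{ij}\ge 1/C$, so the Dirichlet energy controls $\frac{1}{C}\sum_{ij}(u_j-u_i)^2$ from below. The next step is to bound the right-hand side: by Cauchy–Schwarz and hypothesis (ii), $|\sum_{ij} x_{ij}(u_j-u_i)| \le C\big(\sum_{ij} l_{ij}^4\big)^{1/2}\big(\sum_{ij}(u_j-u_i)^2\big)^{1/2}$, which after Young's inequality can be absorbed into the Dirichlet energy at the cost of a term of order $\sum_{ij} l_{ij}^4 \le |l|^2 |V|_l$. Hypothesis (iv) bounds the $y$-term by $C|l|\,|V|_l^{1/2}\sum_i D_{ii}|u_i|$, which Young's inequality splits against $\sum_i D_{ii}u_i^2$ to leave a term of order $|l|^2|V|_l$.

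At this point I would have a bound of the shape $\text{(Dirichlet energy)} + \langle Du,u\rangle \le C''|l|^2|V|_l$, i.e. control on both the gradient energy and the weighted $\ell^2$ mass of $u$. The crucial remaining step is to upgrade this energy bound to the stated $L^\infty$ bound $|u|\le C'|l|\,|V|_l^{1/2}$. This is where the $C$-isoperimetric condition enters: it is equivalent (via the standard coarea/layer-cake argument on level sets $V_t = \{u > t\}$, applying $\min\{|V_t|_l, |V|_l-|V_t|_l\}\le C|\partial V_t|_l^2$ at each level) to a discrete Sobolev inequality relating the $l$-weighted $L^2$ (or $L^1$) norm of $u$, measured against its median, to the weighted Dirichlet energy $\sum_{ij} l_{ij}^2|u_j-u_i|^2 / l_{ij}^2$-type quantity. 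Combining this Sobolev inequality with the energy bound, and handling the rescaling by the edge-length weights $l_{ij}$, should yield the pointwise bound; the presence of the diagonal term $Du$ (nonzero by hypothesis (iii)) pins down the additive constant so that $u$ itself, not merely $u$ minus a constant, is controlled.

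The main obstacle I anticipate is the passage from the $L^2$ energy estimate to the $L^\infty$ conclusion, since a naive Sobolev inequality on a graph only gives control in an averaged norm, and closing the gap to the sup-norm typically requires either a Moser iteration (iterating the energy estimate against powers $u^{2^k}$ and tracking the constants through the isoperimetric inequality) or a De Giorgi truncation argument on the super-level sets. Getting the constant $C'$ to depend only on $C$—and in particular to remain uniform as the mesh refines and $|V|$ grows—is the delicate point, and it is exactly the reason the hypotheses are scaled in terms of $|l|$ and $|V|_l$; carefully checking that every application of Cauchy–Schwarz and the isoperimetric bound preserves this scaling is the heart of the argument. Since the lemma is quoted from \cite{wu2020convergence}, I would ultimately verify that the reformulated hypotheses (i)–(iv) match the hypotheses used there and cite the iteration verbatim rather than reprove it.
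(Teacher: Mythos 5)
The paper does not actually prove this lemma: it is imported, as stated, from the second conclusion of Lemma 2.3 in \cite{wu2020convergence}, so your closing decision to ``cite the iteration verbatim'' coincides with the entirety of the paper's own argument. Your reconstruction of what that proof must look like is right in outline --- the isoperimetric hypothesis is indeed exploited through the level sets $V_t=\{u>t\}$ to turn energy control into an oscillation bound --- but the specific route you sketch has two concrete problems.

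First, the global energy pairing mishandles the diagonal term. Hypothesis (iii) puts no upper bound on $D$ and allows $D$ to be supported on a single vertex with an arbitrarily small entry. Your Young's inequality step applied to $\sum_i y_iu_i\le C|l|\,|V|_l^{1/2}\sum_iD_{ii}|u_i|$ leaves a remainder of order $|l|^2|V|_l\sum_iD_{ii}$, which is not controlled in terms of $C$; and even granting the energy bound, control of $\langle Du,u\rangle$ only yields $|u_{i_0}|\lesssim |l|\,|V|_l^{1/2}D_{i_0i_0}^{-1/2}$ at a vertex with $D_{i_0i_0}>0$, which blows up as $D_{i_0i_0}\to0$ and therefore cannot ``pin down the additive constant'' with $C'=C'(C)$. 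The mechanism that actually works is to sum the equation over all of $V$: since $\sum_i(\Delta_\eta u)_i=0$ and $\sum_i div(x)_i=0$ by antisymmetry, one gets $\sum_iD_{ii}u_i=\sum_iy_i$, and hypothesis (iv) --- which scales $y_i$ by $D_{ii}$ for exactly this reason --- bounds the $D$-weighted mean of $u$ by $C|l|\,|V|_l^{1/2}$ uniformly in $D$; this, combined with an oscillation bound $\max u-\min u=O(|l|\,|V|_l^{1/2})$, gives the conclusion. Second, that oscillation bound --- the step you yourself flag as ``the main obstacle'' --- is left unproved; naming Moser iteration and De Giorgi truncation as candidates is not an argument, and the proof in \cite{wu2020convergence} is a direct level-set computation (sum the equation over $V_t$, bound the resulting boundary term $\sum_{ij\in\partial V_t}\eta_{ij}(u_i-u_j)$ by $O(|l|\cdot|\partial V_t|_l)$ using (i)--(ii), invoke the isoperimetric inequality on $|\partial V_t|_l$, and integrate in $t$) rather than an iteration of $L^p$ norms. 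As written, the proposal establishes only an $L^2$-type estimate, not the stated $L^\infty$ conclusion.
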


 \subsection{The differential of the discrete curvature}
There is an explicit formula for the infinitesimal changes of the
discrete curvature, as the mesh deforms in its discrete conformal class. 
\begin{proposition}
[Proposition 4.1.6 in \cite{bobenko2015discrete}]
\label{curv diff}
	Given a triangle mesh $(T,l)_E$, if $u*l$ is an admissible edge length, denote $\theta^i_{jk}(u)$ as the inner angle at $i$ in triangle $\triangle ijk$ in $(T,u*l)_E$, and  $K(u)\in\mathbb R^V$ as the discrete curvature in $(T,u*l)_E$. We also define the cotangent weight $\eta\in\mathbb R^E$ as 
	$$
	\eta_{ij}(u)=\frac{1}{2}\cot\theta^k_{ij}(u)+\frac{1}{2}\cot\theta^{k'}_{ij}(u)
	$$
	if $\triangle ijk,\triangle ijk'\in F(T)$ sharing edge $ij$, and 
	$$
\eta_{ij}(u)=\frac{1}{2}\cot\theta_{ij}^k	
	$$
	if $\triangle ijk\in F(T)$ and $ij$ is a boundary edge of $T$.
	Then we have
	$$
	\frac{\partial K}{\partial u}(u)=-\Delta_{\eta(u)}.
	$$
\end{proposition}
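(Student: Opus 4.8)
The plan is to reduce the identity $\partial K/\partial u=-\Delta_{\eta(u)}$ (a statement about the $|V|\times|V|$ Jacobian matrix) to a purely local computation inside a single triangle, and then reassemble the entries edge by edge. Since $K_i(u)$ differs from $-\sum_{jk:\,ijk\in F}\theta^i_{jk}(u)$ only by the additive constant $2\pi$ (interior) or $\pi$ (boundary), and since $\theta^i_{jk}(u)$ depends only on $u_i,u_j,u_k$ through the three scaled lengths of $\triangle ijk$, the whole Jacobian is determined by the $3\times3$ blocks $\partial\theta^i_{jk}/\partial u_m$ with $m\in\{i,j,k\}$. So the first and main task is to pin down these single-triangle derivatives.

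To do that, I would fix one Euclidean triangle of $(T,u*l)_E$, abbreviate its scaled side lengths as $a=l_{jk},\,b=l_{ik},\,c=l_{ij}$ and its angles as $A=\theta^i_{jk},\,B=\theta^j_{ik},\,C=\theta^k_{ij}$, and record the total differential of $A$ in the side lengths. Differentiating the law of cosines $a^2=b^2+c^2-2bc\cos A$ and simplifying with the projection formulas $b=a\cos C+c\cos A$, $c=a\cos B+b\cos A$ yields the clean form $dA=\frac{a}{2\,\mathrm{Area}}\,(da-\cos C\,db-\cos B\,dc)$. Combining this with the scaling relation $\partial\log l_{pq}/\partial u_m=\tfrac12(\delta_{pm}+\delta_{qm})$, together with the elementary identities $\tfrac12(\cot B+\cot C)=a^2/(4\,\mathrm{Area})$ and $\cot C=ab\cos C/(2\,\mathrm{Area})$ (which follow from $\mathrm{Area}=\tfrac12ab\sin C$ and the circumradius relation $R=abc/(4\,\mathrm{Area})$), gives the three target formulas
$$
\frac{\partial\theta^i_{jk}}{\partial u_i}=-\tfrac12\big(\cot\theta^j_{ik}+\cot\theta^k_{ij}\big),\qquad
\frac{\partial\theta^i_{jk}}{\partial u_j}=\tfrac12\cot\theta^k_{ij},\qquad
\frac{\partial\theta^i_{jk}}{\partial u_k}=\tfrac12\cot\theta^j_{ik}.
$$
As a sanity check these sum to zero under $\partial/\partial u_m$, consistent with the angle sum being the constant $\pi$.

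Finally I would assemble $\partial K_i/\partial u_m=-\sum_{jk:\,ijk\in F}\partial\theta^i_{jk}/\partial u_m$. For an off-diagonal entry with $j\sim i$, only the triangle(s) containing the edge $ij$ contribute, and by the second formula they give $-\tfrac12(\cot\theta^k_{ij}+\cot\theta^{k'}_{ij})=-\eta_{ij}(u)$ for an interior edge, or the single-cotangent version $-\tfrac12\cot\theta^k_{ij}=-\eta_{ij}(u)$ for a boundary edge. For the diagonal entry, the first formula gives $\partial K_i/\partial u_i=\tfrac12\sum_{ijk\ni i}(\cot\theta^j_{ik}+\cot\theta^k_{ij})$; regrouping this sum by the edges incident to $i$, each such edge $ij$ collects exactly the cotangents of the angles opposite it in its adjacent triangle(s), so its total contribution is $\eta_{ij}(u)$ and $\partial K_i/\partial u_i=\sum_{j\sim i}\eta_{ij}(u)$. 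These entries $\partial K_i/\partial u_j=-\eta_{ij}$ ($j\sim i$) and $\partial K_i/\partial u_i=\sum_{j\sim i}\eta_{ij}$ are precisely those of $-\Delta_{\eta(u)}$, which proves the proposition. The hard part is the single-triangle identity: coaxing the derivative of one angle into the symmetric cotangent form requires exactly the right interplay of the projection formulas, the area expression, and the circumradius relation, whereas the global reassembly is a routine edge-counting bookkeeping.
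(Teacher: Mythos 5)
Your derivation is correct and is the standard proof of this identity: the paper itself gives no proof, quoting the result from Bobenko--Pinkall--Springborn, and your single-triangle computation $\partial\theta^i_{jk}/\partial u_j=\tfrac12\cot\theta^k_{ij}$, $\partial\theta^i_{jk}/\partial u_i=-\tfrac12(\cot\theta^j_{ik}+\cot\theta^k_{ij})$ followed by edge-by-edge reassembly is exactly the argument behind that cited proposition. The only quibble is cosmetic: the vanishing of $\sum_m\partial\theta^i_{jk}/\partial u_m$ reflects invariance of angles under uniform scaling ($u\mapsto u+\text{const}$) rather than the angle sum being $\pi$, though by the symmetry of the $3\times3$ block the two checks are equivalent.
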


\section{Stereographic Projections of Triangle Meshes}
We will use the stereographic projection to connect convex polyhedral surfaces inscribed in the unit spheres with triangulations of planar convex polygons.
Denote $N$ as the north pole $(0,0,1)$ of the unit sphere $\mathbb S^2$.
The stereographic projection $p_N$ is a map from $\mathbb R^3\backslash\{z = 1\}$ to the $xy$-plane, which is identified as $\mathbb R^2$ or $\mathbb C$. The map $p_N$ is defined as 
$$
p_N(x,y,z)=\frac{x}{1-z}+i\frac{y}{1-z}.
$$
It is well known that the restriction of $p_N$ on $\mathbb S^2\backslash\{N\}$ is a conformal diffeomorphism to $\mathbb R^2$, and maps any circle to a circle or a straight line.
Given a convex polyhedral surface $P$ in $\mathcal P$ such that $N$ is a vertex of $P$, denote $\mathring T_P$ as the subtriangulation of $T_P$ with the open 1-star neighborhood of $N$ in $T_P$ being removed.
In this case, $\mathring P$ denotes the carrier of $\mathring T_P$ and is a topological closed disk.
We use $|\cdot|_2$ to denote the standard $l^2$-norm.

 \begin{lemma}
 \label{injectivity}
 (a) Assume $P\in\mathcal P$ and $P$ contains $N$ as a vertex, then $p_N$ is injective on $\mathring{P}$, and $Q=p_N(\mathring P)$ is a convex polygon, and $T_Q=p_N(\mathring T_P)$ is a geodesic triangulation of $Q$. 
Further, if we naturally identify $\mathring T_P$ and $T_Q$, and denote $l_P$ (resp. $l_Q$) as its edge length on $P$ (resp. $Q$), then
 	\begin{equation}
 	\label{discrete factor for projection}
	l_Q=w*l_P\text\quad{where}\quad
	w_{i}=\log\frac{2}{|i-N|_2^2}=\log\frac{|p_{N}(i)|_2^2+1}{2}, \quad \forall i\in V(\mathring T_P).
	\end{equation}
 (b) Assume $Q$ is a convex polygon in $\mathbb R^2$, and $T_Q$ is a strictly Delaunay triangulation of $Q$ such that $0$ is an interior vertex and $K_i>0$ for any boundary vertex $i$ in $V(T_Q)$. Then there exists a convex polyhedral surface $P\in\mathcal P$ such that $N\in P$ and $p_N(\mathring P)=Q$ and $p_N(\mathring T_P)=T_Q$.
 \end{lemma}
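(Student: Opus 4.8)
The plan is to treat the two directions separately, in both cases exploiting the two classical features of the stereographic projection $p_N$: it is the central (perspective) projection from the vertex $N$ onto the plane $\{z=0\}$, so it carries straight segments to straight segments and flat triangles to flat triangles; and it carries circles on $\mathbb S^2$ to circles or lines in the plane, so it intertwines the spherical and planar empty-circle (Delaunay) conditions.

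For part (a) I would first record the distance identity. For $i,j\in\mathbb S^2\setminus\{N\}$ one has the standard chordal-to-planar formula $|p_N(i)-p_N(j)|_2=2|i-j|_2/(|i-N|_2|j-N|_2)$, together with the elementary identity $|i-N|_2^2=2(1-z_i)=4/(|p_N(i)|_2^2+1)$ valid on the unit sphere. Since the faces of $\mathring P$ are flat triangles with vertices on $\mathbb S^2$, one has $(l_P)_{ij}=|i-j|_2$ and $(l_Q)_{ij}=|p_N(i)-p_N(j)|_2$, so the distance identity is exactly the asserted relation (\ref{discrete factor for projection}) with $e^{w_i/2}=\sqrt2/|i-N|_2$. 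For the geometric claims, injectivity of $p_N$ on $\mathring P$ follows from convexity: writing $B$ for the convex body with $\partial B=P$, a line through the boundary vertex $N$ meets $\partial B$ in at most two points, so for $q$ in the interior of a face of $\mathring P$ the open segment $(N,q)$ lies in $\mathrm{int}(B)$ and $q$ is the only point of $\mathring P$ on that line; two distinct points of $\mathring P$ with the same image would be collinear with $N$, a contradiction. Convexity of $Q$ comes from the same picture: $p_N$ sends a point $x$ with $z_x<1$ to the intersection of the ray $Nx$ with $\{z=0\}$, so $Q=p_N(\mathring P)$ is the cross-section by $\{z=0\}$ of the tangent cone of $B$ at $N$; this cone is convex with extreme rays along the edges $Nv_i$ to the neighbors $v_i$ of $N$, hence its cross-section is a convex polygon with vertices $p_N(v_i)$. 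Finally $T_Q=p_N(\mathring T_P)$ is a geodesic triangulation because central projection maps the straight edges of $\mathring T_P$ to straight edges in $\{z=0\}$.

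For part (b) I would produce $P$ indirectly through Proposition \ref{equivalence} rather than build the convex hull by hand. Lift every vertex of $T_Q$ to $\mathbb S^2$ by $p_N^{-1}$ (so the interior vertex $0$ lifts to the south pole), lift the triangulation of $Q$, and cap the complementary spherical disk containing $N$ by the fan of geodesic triangles $\triangle N\,\tilde v_i\,\tilde v_{i+1}$ over the boundary cycle $v_1,\dots,v_m$ of $Q$; call the resulting geodesic triangulation of $\mathbb S^2$ by $\hat T$. If $\hat T$ is strictly Delaunay, then by Proposition \ref{equivalence} it equals $p(T_P)$ for a unique $P\in\mathcal P$; its vertices are the lifts together with $N$, so $N\in V(P)$, and removing the star of $N$ and applying $p_N$ returns $T_Q$ exactly as computed in part (a), giving $p_N(\mathring P)=Q$ and $p_N(\mathring T_P)=T_Q$. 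That $0$ is strictly inside $P$ is clear once both poles $N,S$ and the boundary lifts (which encircle $0$ because $0\in\mathrm{int}\,Q$) are vertices, for then the vertex set lies in no closed hemisphere.

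It remains to check that $\hat T$ is strictly Delaunay, and this is the step I expect to be the main obstacle, since it is where the three hypotheses on $T_Q$ are consumed and where the orientation bookkeeping under $p_N$ is delicate. For an interior edge of $T_Q$ all four relevant vertices are finite, the spherical circumcircle is the $p_N$-image of the planar circumcircle, and emptiness transfers directly from the strict Delaunay condition (\ref{delaunay}) on $T_Q$. For a boundary edge $v_iv_{i+1}$, one adjacent triangle is a cap triangle and the opposite vertex is $N\leftrightarrow\infty$, which lies outside every finite circumcircle, so that half is automatic; for the other half the circumcircle passes through $N$ and hence projects to the line $\overline{v_iv_{i+1}}$, and convexity of $Q$ places the opposite interior vertex strictly on the correct side. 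For a cap edge $N\tilde v_i$ both adjacent triangles are cap triangles, the relevant circumcircle again projects to a line $\overline{v_{i-1}v_i}$, and the empty-circle condition becomes the requirement that $v_{i+1}$ lie strictly on the interior side of that line, which is exactly strict convexity of $Q$ at $v_i$, i.e.\ the hypothesis $K_i>0$. The crux is to fix once and for all which of the two spherical caps cut out by a circle through $N$ corresponds to which planar half-plane under $p_N$; once that orientation is pinned down, the three cases show $\hat T$ is strictly Delaunay and the construction closes up.
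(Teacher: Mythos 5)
Your argument is correct and, at the level of the key ideas, coincides with the paper's: injectivity in part (a) via collinearity with $N$ plus convexity of the enclosed body, the chordal-distance identity for (\ref{discrete factor for projection}), and in part (b) a three-case edge analysis that consumes exactly the hypotheses the paper consumes (strict Delaunay for interior edges, convexity of $Q$ together with $0\in\mathrm{int}(Q)$ for boundary edges, and $K_i>0$ for the edges meeting $N$). The one organizational difference is in part (b): the paper builds $P$ directly as a union of flat triangles and verifies each dihedral angle is less than $\pi$, while you build the capped geodesic triangulation $\hat T$ of $\mathbb S^2$, verify it is strictly Delaunay, and then invoke Proposition \ref{equivalence}. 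Since that proposition is precisely the local-Delaunay/local-convexity dictionary, the two verifications are term-by-term equivalent; your version avoids re-deriving the dictionary, at the cost that you must also check that the lifted-and-capped complex really is a geodesic triangulation of $\mathbb S^2$ in the paper's sense --- in particular that the edges are great-circle arcs rather than the circular arcs that $p_N^{-1}$ literally produces, and that the resulting geodesic triangles tile the sphere --- a point you do not address, though the paper's counterpart (``the union is a topological sphere'') is equally brisk. Two smaller remarks: your injectivity argument only treats $q$ in the relative interior of a face, so you should also dispose of points lying on edges or vertices of $\mathring P$ and of lines through $N$ contained in a face of $P$ (the paper handles all cases at once by showing the whole segment $\overline{xN}$ would lie in a single face with $N$ as a vertex, forcing $x,y$ onto the opposite edge and contradicting collinearity with $N$); and the cap/half-plane orientation bookkeeping you flag as the remaining obstacle is indeed the only delicate point, but it resolves as you expect and the paper asserts the corresponding facts at the same level of detail.
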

 \begin{proof}
(a) Let us first prove the injectivity by contradiction. Suppose $x,y$ are two different points on $\mathring{P}$ such that $p_N(x)=p_N(y)$. Then $N,x,y$ are co-linear and pairwise different. Without loss of generality, assume $y$ is between $x$ and $N$. Then it is not difficult to show that the line segment $\overline{xN}\subset P$.
 	Suppose $\triangle ijk$ is a face of $P$ containing the line segment $\overline{xN}$. Then $N$ has to be one of the vertex of $\triangle ijk$, and $x$, $y$ are contained in the edge in $\triangle ijk$ opposite to $N$. But this implies that $N, x, y$ are not co-linear, which leads to a contradiction.
 	
 	So $Q$ is a polygon, and $T_Q$ is a geodesic triangulation of $Q$. Any inner angle of the polygon $Q$ is less than $\pi$, since the dihedral angle on any edge $Ni\in E(T_P)$ is less than $\pi$. Equation (\ref{discrete factor for projection}) can be proved by a standard computation.
 
 (b) We will first construct a polyhedron $P$ and then show that it is satisfactory. The set of vertices of $P$ is given by
  $V_P=(p_N|_{\mathbb S^2})^{-1}(V(T_Q))\cup\{N\}$, and the set of faces of $P$ is given by a set of flat triangles in $\mathbb R^3$ with vertices in $V_P$. The $\triangle ijk$ is a triangle in $P$ if and only if $p_N(\triangle ijk)$ is a triangle in $T_Q$, or $\{i,j,k\}=\{N,x,y\}$ where $p_N(xy)$ is a boundary edge of $T_Q$. 
  
  It is ordinary to verify that the union $P$ of such triangles is a topological sphere, and these flat triangles naturally give a triangulation $T_P$ of $P$. It remains to show that $P\in\mathcal P$, or indeed that any dihedral angle in $T_P$ is less than $\pi$.
  
  Assume $ij$ is an edge in $T_P$. If $i=N$, the dihedral angle at $ij$ is less than $\pi$ because the discrete curvature at $p_N(j)$ in $T_Q$ is greater than $0$. If $p_N(ij)$ is a boundary edge in $T_Q$, assume $\triangle ijk\in F(T_P)$ and $k\neq N$, and then the dihedral angle at $ij$ is less than $\pi$ because $p_N(k)$ and $0$ are in the same half plane divided by $p_N(ij)$. Now we can assume that $p_N(ij)$ is an inner edge in $T_Q$, and $p_N(\triangle ijk), p_N(\triangle ijk')$ are two triangles in $T_Q$. Since $T_Q$ is strictly Delaunay, $p_N(k')$ is strictly outside of the circumcircle of $p_N(\triangle ijk)$. Since $p_N|_{\mathbb S^2}$ preserves circles, $k'$ is strictly outside of the spherical circumcircle of $\{i,j,k\}$ on $\mathbb S^2$. So the dihedral angle at $ij$ is less than $\pi$.
\end{proof}

In the following lemma we prove that the stereogarphic projection preserves the $\epsilon$-regularity.
\begin{lemma}
	\label{regular for stereographic condition}
	Assume $P\in\mathcal P$, and $N\in P$, and $T=p( T_P)$ is a geodeisc triangulation of $\mathbb S^2$,  and $Q=p_N(\mathring P)$, and $T_Q=p_N(\mathring T_P)$, 
	and $l$ (resp. $l_Q$) denotes the edge length of $T$ (resp. $T_Q$) on $\mathbb S^2$ (resp. $Q$).
	Then for any $\epsilon>0$, there exists constants $\epsilon'=\epsilon'(\epsilon)>0$ and $\delta=\delta(\epsilon)>0$ such that if $(T,l)_S$ is $\epsilon$-regular and $|l|<\delta$, then $(T_Q,l_Q)_E$ is $\epsilon'$-regular, and $K_i\geq\epsilon'$ for any boundary vertex $i$ in $T_Q$.
\end{lemma}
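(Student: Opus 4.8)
The plan is to transfer regularity through the explicit conformal factor $w$ that relates $(T_Q,l_Q)_E$ to the inscribed polyhedral triangulation $\mathring T_P$. By Proposition \ref{equivalence} the faces of $P$ carry the chordal lengths $l_P=2\sin\frac{l}{2}$, and by Lemma \ref{injectivity}(a) we have $l_Q=w*l_P$ with $w_i=\log\frac{2}{|i-N|_2^2}$. Thus each Euclidean triangle of $(T_Q,l_Q)_E$ is the image of the corresponding chordal triangle of $\mathring T_P$ under the vertex scaling $w$, and the whole lemma reduces to showing that $w$ distorts each triangle by a \emph{bounded} amount and that bounded vertex scalings preserve quantitative non-degeneracy and the uniform Delaunay property.

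\textbf{Preliminary estimates and conditions (a), (b).} First I would record, from the elementary estimates of Section 4, that for an $\epsilon$-regular mesh with $|l|\le\delta$ every triangle has diameter $O(|l|)$, so the chordal triangles of $\mathring T_P$ differ from the spherical ones by $O(|l|)$ and are therefore $\frac{\epsilon}{2}$-regular and $\frac{\epsilon}{2}$-Delaunay. The one substantive input near $N$ is the comparability $|i-N|_2/|j-N|_2\le C(\epsilon)$ for every edge $ij\in E(T)$: an edge length at a vertex is comparable, with constant $C(\epsilon)$, to that vertex's distance to $N$ (around any vertex the incident edges are mutually comparable by $\epsilon$-regularity, and a bounded fan of such triangles propagates this), so $\big||i-N|_2-|j-N|_2\big|\le|i-j|_2$ cannot alter the distance to $N$ by more than a bounded factor. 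Comparability yields $|w_i-w_j|=2\big|\log(|j-N|_2/|i-N|_2)\big|\le M(\epsilon)$ on every edge. Condition (a) then follows from a quantitative fact: a vertex scaling whose edge-differences are bounded by $M$ sends a $\frac{\epsilon}{2}$-regular triangle to an $\epsilon'(\epsilon)$-regular one, since the edge-length ratios change by at most $e^{\pm M}$ and hence the strict triangle inequalities (and the minimal angle) survive with a definite margin. For condition (b) I would use that the local Delaunay condition is governed by the length cross-ratio, which is invariant under vertex scaling, so $T_Q$ inherits Delaunayness from $\mathring T_P$; the same bounded-distortion estimate upgrades this to uniform $\epsilon'$-Delaunay. (Alternatively, Delaunayness of $T_Q$ is immediate from the strict convexity of $P$ through Lemma \ref{injectivity}.)

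\textbf{Boundary curvature.} For $K_i\ge\epsilon'$ on $\partial Q$ I would exploit that each spoke $Ni$ lies on a great circle through $N$, which $p_N$ maps to a straight ray through the origin. Writing the link of $N$ as $j_1,\dots,j_m$, the boundary vertices $p_N(j_k)$ therefore lie on these rays at polar angles $\Phi_k$ with consecutive gaps $\Phi_{k+1}-\Phi_k=\theta^N_{j_kj_{k+1}}\ge\epsilon$ (the spherical angle at $N$, preserved by conformality) and at radii $R_k=\cot(\rho_k/2)$, where $\rho_k=\mathrm{dist}(N,j_k)$; since the $\rho_k$ are comparable and $\deg N\le 2\pi/\epsilon$, the $R_k$ are mutually comparable. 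An elementary planar estimate then shows that the exterior angle of the convex polygon $Q$ at such a vertex, which is exactly $K_{j_k}$, is bounded below by some $\epsilon'(\epsilon)>0$: azimuthal gaps bounded below together with comparable radii force a definite turning of the boundary. (In the model case of equal radii the exterior angle is precisely $\tfrac12(\theta^N_{j_{k-1}j_k}+\theta^N_{j_kj_{k+1}})\ge\epsilon$.)

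\textbf{Main obstacle.} The genuine difficulty is entirely concentrated near $N$, where the stereographic conformal factor blows up and varies by an $O(1)$ factor across a single triangle; consequently, unlike the regime far from $N$, the angles of $T_Q$ are \emph{not} close to the spherical angles and no naive ``slowly varying factor'' comparison is available. The resolution is to never estimate individual angles there but to use instead (i) the comparability $|i-N|_2\asymp|j-N|_2$ along edges, which converts the unbounded weight $w$ into a bounded edge-difference $|w_i-w_j|$, and (ii) for the boundary, the \emph{exact} radial-ray image of the spokes $Ni$ rather than any bending estimate, since the geodesic arcs incident to the link bend by $O(1)$. Once these two points are secured, the lemma follows from the soft principle that bounded vertex scalings preserve quantitative regularity and Delaunayness; establishing the comparability of distances to $N$ is the step I expect to require the most care.
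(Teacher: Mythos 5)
Your reduction of part (a) to the principle that ``a vertex scaling whose edge-differences are bounded by $M$ sends an $\frac{\epsilon}{2}$-regular triangle to an $\epsilon'(\epsilon)$-regular one'' is the point where the argument breaks, because that principle is false. Take an equilateral triangle with unit sides and the factor $u_i=u_j=M$, $u_k=0$: the scaled lengths are $e^{M},e^{M/2},e^{M/2}$, and the triangle inequality already fails once $e^{M/2}\ge 2$, i.e.\ $M\ge 2\log 2$. So bounded edge-differences of the conformal factor do not preserve non-degeneracy, let alone a definite angle margin. In your setting the relevant bound near $N$ is $|w_i-w_j|=2\big|\log(|j-N|_2/|i-N|_2)\big|\le 2\log\frac{1}{\sin\epsilon}$, an $O(1)$ quantity that is large precisely when $\epsilon$ is small — exactly the regime where the soft principle gives nothing. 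The same issue undermines the ``bounded-distortion upgrade'' you invoke for uniform Delaunayness in (b); there, however, the paper's route is cheap and exact: the intersecting angle $\Theta_{ij}$ of the two circumcircles is a conformal invariant preserved by $p_N$ and equals $2\pi-2(\phi^k_{ij}+\phi^{k'}_{ij})$ in the plane, which gives (b) immediately and gives (c) as $K_{p_N(j)}=\Theta_{Nj}\ge 2\epsilon$.

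The real content of part (a), for which your proposal has no substitute, is that $N$ must stay conformally bounded away from the circumcircle of \emph{every} triangle of $T$. The paper identifies $\phi^i_{jk}$ with the intersecting angle of the circumcircle $C$ of $\triangle ijk$ and the circle $C'$ through $j,k,N$; this angle tends to $0$ (or $\pi$) as $N$ approaches $C$, so non-degeneracy of the image triangle is \emph{equivalent} to a quantitative separation of $N$ from $C$. Establishing that separation uses more than the edge-comparability you isolate: it uses the uniform Delaunay margin (the circles through pairs of vertices meeting $C$ at angle $2\epsilon$, outside of which $N$ must lie) together with empty disks $U_i,U_j,U_k$ around the vertices, combined in a normalization-and-compactness (``seven circles'') argument. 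Your boundary-curvature paragraph is closer to workable but also not airtight as stated: with radii merely comparable by a constant $C(\epsilon)$ and azimuthal gaps $\ge\epsilon$, the middle vertex can approach the chord joining its neighbors and the exterior angle can degenerate to $0$, so ``comparable radii plus angular gaps'' does not by itself force a definite turning.
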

\begin{proof}
	Let $\theta_{ij}^k$ denote the inner angles in $(T,l)_S$, and $\phi_{ij}^k$ denote the inner angles in $(T_Q,l_Q)_E$. We need to prove following three statements: (a) $\phi_{ij}^k$ are bounded below by $\epsilon'>0$, and (b) $T_Q$ is strictly Delaunay with angle sums $\phi_{ij}^k + \phi_{ij}^{k'}$ bounded above by $\pi - \epsilon'$, and (c) $K_i\geq\epsilon'$ for any boundary vertex $i$ in $T_Q$. 
	
Consider a pair of triangles $\triangle ijk$ and $\triangle ijk'$ in $(T,l)_S$, and by assumption $\theta_{ij}^k + \theta_{ij}^{k'} \leq \pi - \epsilon$. 	Let $\Theta_{ij}$ be the intersecting angle of the circumcircles of two triangles $\triangle ijk$ and $\triangle ijk'$ on $\mathbb{S}^2$. It is elementary to show that 
	$$
	\Theta_{ij}=\theta_{jk}^i+\theta_{ik}^j+\theta_{jk'}^i+\theta^j_{ik'}
	-(\theta_{ij}^k+\theta_{ij}^{k'})>2\pi-2(\theta_{ij}^k+\theta_{ij}^{k'})\geq2\epsilon.
	$$
	The stereographic projection preserves angles and circles,
	so the intersecting angle of the circumcircles  of $p_N(\triangle ijk)$ and $p_N(\triangle ijk')$ in $T_Q$ is also $\Theta_{ij}$, if $N$ is not contained in $\triangle ijk\cup\triangle ijk'$. 
	Then it is also ordinary to show that this intersecting angle is
	$$
\Theta_{ij}=\phi_{jk}^i+\phi_{ik}^j+\phi_{jk'}^i+\phi^j_{ik'}
	-(\phi_{ij}^k+\phi_{ij}^{k'})=2\pi-2(\phi_{ij}^k+\phi_{ij}^{k'}).
	$$
	Therefore part (b) is true by 
	$$ \phi_{ij}^{k} + \phi_{ij}^{k'}  =\pi-\frac{\Theta_{ij}}{2}\leq \pi - \epsilon.
	$$
	If $i=N$, then the circumcircles of $\triangle ijk$ and $\triangle ijk'$ are mapped to straight lines $p_N(jk)$ and $p_N(jk')$. So the angle between $p_N(jk)$ and $p_N(jk')$, or the inner angle of the polygon $Q$ at $p_N(j)$, is equal to $\pi-\Theta_{ij}\leq\pi-2\epsilon$. So part (c) is true.
	
	\begin{figure}[h]
		\centering\includegraphics[width=0.65\textwidth]{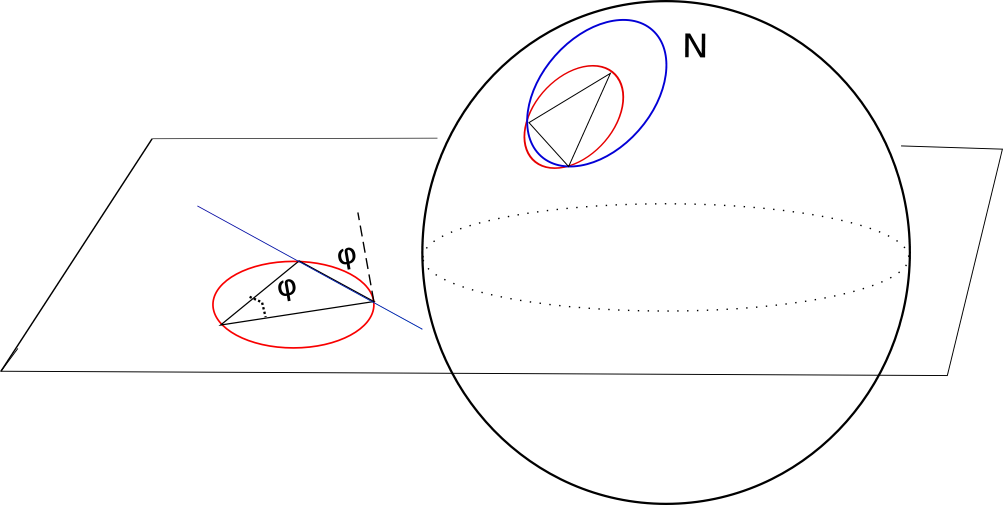}
		\caption{Projection of angles under stereographic projection}
		\label{graph of Delaunay stereographic projection}
	\end{figure}

	Now we prove part (a). Assume $\triangle ijk$ is a triangle in $T$ not containing $N$, and $C$ is the circumcircle of $\triangle ijk$ on $\mathbb S^2$, and $C'$ is the circle on $\mathbb S^2$ containing $\{j,k,N\}$. Then $p_N(C)$ is the circumcircle of $p_N(\triangle ijk)$ and $p_N(C')$ is the straight line $p_N(jk)$, and the intersecting angle of them is equal to the intersecting angle of $C$ and $C'$. It is elementary to show that $\phi^i_{jk}$ is equal to an intersecting angle of $p_N(C)$ and $p_N(C')$, i.e., an intersecting angle of 
	$C$ and $C'$. See Figure \ref{graph of Delaunay stereographic projection} for illustration.  We only need to show that the intersecting angle of $C$ and $C'$ are at least $\epsilon'$ for some constant $\epsilon'(\epsilon)>0$, when $|l|<\delta$ for some constant $\delta(\epsilon)>0$.

		Denote $R$ as the spherical radius of the cirlce $C$.
		Since $(T,l)_S$ is $\epsilon$-regular and $|l|<\delta$, the degree (valence) of any vertex in $T$ is at most $\lfloor2\pi/\epsilon\rfloor$, and $l_{ij},l_{ik},l_{jk}$ are at least $r_1R$ for some constant $r_1=r_1(\epsilon)$.
		Further it is not difficult to show that there exists a constant $r_2(\epsilon)>0$ such that the 1-star neighborhood of $i$ in $T$ contains the open spherical disk $U_i$ centered at $i$ with radius $r_2R$. So $N\notin U_i$. We define $U_j$ and $U_k$ similarly.

	Assume $C_k$ is the circumcircle of the triangle in $T$ that is adjacent to $\triangle ijk$ along the edge $ij$. Then the intersecting angle between $C$ and $C_k$ is $\Theta_{ij}\geq2\epsilon$. Assume $C_k'$ is the circle on $\mathbb S^2$ such that $i,j\in C_k'$ and the intersecting angle between $C$ and $C_k'$ is equal to $2\epsilon$. Denote $D_k$ (resp. $D_k'$, $D$) as the open spherical disk bounded by $C_k$ (resp. $C_k'$, $C$). Then $D_k'\subset D\cup D_k$ and has a diameter less than $r_3R$ for some constant $r_3(\epsilon)>0$. Then $N\notin D_k$ and $N\notin D$ by the convexity of $P$, and so $N\notin D_k'$. Define $D_i'$ and $D_j'$ similarly.

 	Without loss of generality, assume $j'$ is the opposite point of $j$ on $\mathbb S^2$, and $j'\neq N$. Denote $X$ as the tangent plane of $\mathbb S^2$
	 at $j$ in $\mathbb R^3$, and $p'$ as the projection map centered at $j'$ and mapping $\mathbb S^2\backslash\{j'\}$ to $X$. Since $p'$ preserves the angles and disks, $p'(D)$, $p'(D_i')$, $p'(D_j')$, $p'(D_k')$, $p'(U_i)$, $p'(U_j)$, $p'(U_k)$ are all disks on $X$, and the intersecting angle between $p'(D)$ and $p'(D_i)$ (resp. $p
	 (D_j)$, $p'(D_k)$) is $2\epsilon$, and we only need to show that the intersecting angle between $p'(C)$ and $p'(C')$ is at least $\epsilon'$ for some constant $\epsilon'(\epsilon)>0$. The projection $p'$ is very close to an isometry near the point $j$, so
	 if $\delta=\delta(\epsilon)>0$ is sufficiently small, 
	 $$
	 \frac{1}{2}d(x,y)\leq d(p'(x),p'(y))\leq 2d(x,y)
	 $$
	 for any $x,y\in D\cup D_i'\cup D_j'\cup D_k'\cup U_i\cup U_j\cup U_k$. Assume $R'$ is the radius of $p'(D)$ and it is not difficult to show that
	 the radii of $p'(U_i)$ and $p'(U_j)$ and $p'(U_k)$ are at least $r_2R'/4$, and 
	 $ 
	 d(p'(a),p'(b))\geq r_1R'/4
	 $
	 for any two different vertices $a,b$ in $\{i,j,k\}$. 
	 
	 \begin{figure}[h]
		\centering\includegraphics[width=0.4\textwidth]{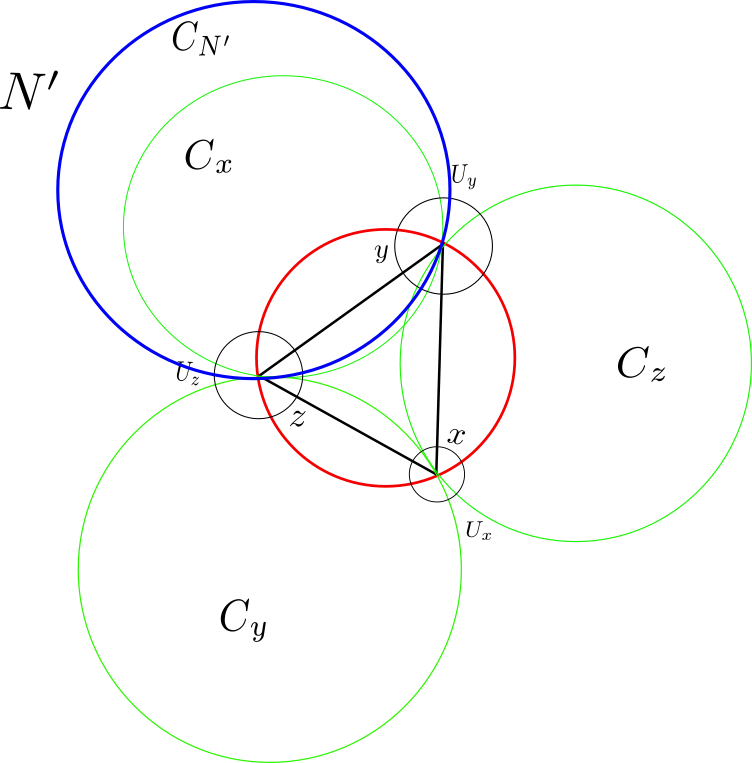}
		\caption{Seven circles.}
		\label{seven circle}
	\end{figure}

	 By a scaling, it suffices to prove the following claim. Assume $\triangle xyz$ is a triangle inscribed in the unit circle $\mathbb S$ in the plane, and all the edge lengths are at least $r_1/4$, and $C_x$ is the circle such that $y,z\in C_x$ and $x$ is not inside $C_x$ and the intersecting angle between $C$ and $C_x$ is $2\epsilon$, and $C_y$ and $C_z$ are defined similarly, and $U_x$ is the open disk centered at $x$ with radius $r_2/4$, and $U_y$ and $U_z$ are defined similarly, and $N'$ is a point that is not strictly inside of the unit circle or $C_x$ or $C_y$ or $C_z$ or $U_x$ or $U_y$ or $U_z$, and $C_{N'}$ is the circle (or the straight line) passing through $y,z,N'$, then the intersecting angle $\theta$ between $C$ and $C_{N'}$ is at least $\epsilon'$ for some constant $\epsilon'(\epsilon)>0$. See Figure \ref{seven circle} for illustration.

	 If the above claim is not true, for some $\epsilon>0$, one can pick a sequence $(x_n,y_n,z_n,N'_n)\in(\mathbb R^2)^4$ such that the resulted intersecting angle $\theta_n$ goes to $0$. By picking a subsequence, we may assume $x_n\rightarrow x\in\mathbb S$ and $y_n\rightarrow y\in\mathbb S$ and $z_n\rightarrow z\in\mathbb S$ and $N'_n\rightarrow N'\in\mathbb R^2\cup\{\infty\}$, and then by the continuity $(x,y,z,N')$ satisfies the conditions in the claim, and the resulted intersecting angle is $\theta=0$. This means that $N'$ is on the unit circle. However this is impossible because by the continuity
	 $$
	 N'\notin D_x\cup D_y\cup D_z\cup U_x\cup U_y\cup U_z\supset\mathbb S
	 $$
	 where $D_x$ (resp. $D_y$, $D_z$) is the open disk bounded by $C_x$ (resp. $C_y$, $C_z$).
	 	\end{proof}

\section{Estimates on Triangulations}
In this section we will introduce some elementary estimates on triangles, and properties of triangulations on a Rimannian surface.
\subsection{Elementary error estimates on triangles}
Denote $|\triangle ABC|$ (resp. $|\triangle A'B'C'|$) as the area of $\triangle ABC$ (resp. $\triangle A'B'C'$).
We have the following estimate on the change of the angles and area of a Euclidean triangle.
      \begin{lemma}[Lemma 3.5 in \cite{wu2020convergence}]
      	\label{comparison lemma for Euclidean triangles}
      	Given a Euclidean triangle $\triangle ABC$ with edge lengths $a$, $b$, and $c$, if all the angles in $\triangle ABC$ are at least $\epsilon>0$, and $\delta<\epsilon^2/48$, and
      	$$
      	|a'-a|\leq \delta a,\quad |b'-b|\leq \delta a,\quad |c'-c|\leq \delta c,
      	$$
      	then
      	$a',b',c'$ form a Euclidean triangle with opposite inner angles $A',B',C'$ respectively such that
      	$$
      	|A'-A|\leq\frac{24}{\epsilon}\delta,
      	$$
      	and
      	$$
      	\bigg||\triangle A'B'C'|-|\triangle ABC|\bigg|\leq\frac{576}{\epsilon^2}\delta\cdot|\triangle ABC|.
      	$$
      \end{lemma}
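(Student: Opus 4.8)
The plan is to regard each inner angle and the area as functions of the edge lengths $(a,b,c)$ and to control their variation along the straight segment from $(a,b,c)$ to $(a',b',c')$ by a differential estimate, using a continuity (bootstrap) argument to keep every intermediate triangle nondegenerate. Reading the middle hypothesis as $|b'-b|\le\delta b$ (so that all three perturbations are relative), I would start from the law of cosines $\cos A=(b^2+c^2-a^2)/(2bc)$, differentiate, and simplify using $2\mathcal A=bc\sin A$ (with $\mathcal A$ the area) together with the projection identity $a=b\cos C+c\cos B$, to get the clean formula
\[
dA=\frac{1}{2\mathcal A}\bigl(a\,da-a\cos C\,db-a\cos B\,dc\bigr).
\]

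The crucial point for obtaining the stated $\epsilon$-dependence is that, after pairing each term with the appropriate form $2\mathcal A=bc\sin A=ca\sin B=ab\sin C$, the coefficients collapse to cotangents. Indeed the first term is bounded by $\delta\,\sin A/(\sin B\sin C)=\delta(\cot B+\cot C)$ via the identity $\sin A/(\sin B\sin C)=\cot B+\cot C$, and the second and third by $\delta|\cot C|$ and $\delta|\cot B|$. Since every angle of $\triangle ABC$ lies in $[\epsilon,\pi-2\epsilon]$ and $\cot x<1/x$ with $\cot$ decreasing, each of these contributions is $O(1/\epsilon)$, giving an infinitesimal bound $|dA|\le C_0\delta/\epsilon$ with an explicit $C_0$. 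This is exactly where the naive estimate would be fooled: bounding $a/(2\mathcal A)$ and $|\cos|\le1$ term by term only yields $\delta/\sin^2\epsilon\sim\delta/\epsilon^2$, and the cotangent cancellation is what recovers the correct $\delta/\epsilon$ rate.

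To pass from the infinitesimal to the finite estimate I would parametrize $a(t)=a+t(a'-a)$, etc., for $t\in[0,1]$, and let $t^\ast$ be the largest time up to which every angle of $\triangle(a(t),b(t),c(t))$ stays $\ge\epsilon/2$; on that interval the triangle inequalities hold, the triangle is nondegenerate, the instantaneous relative perturbation rate is $\le\delta/(1-\delta)$, and all angles exceed $\epsilon/2$, so integrating the differential bound gives $|A(t)-A|\le\frac{24}{\epsilon}\delta$. The hypothesis $\delta<\epsilon^2/48$ then forces $|A(t)-A|<\epsilon/2$, so no angle can drop to $\epsilon/2$, whence $t^\ast=1$. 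This single continuity argument simultaneously shows that $a',b',c'$ do form a Euclidean triangle and yields the angle bound $|A'-A|\le\frac{24}{\epsilon}\delta$ (the constant $24$ is not optimized).

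Finally, for the area I would write $\mathcal A=\tfrac12 bc\sin A$ and $\mathcal A'=\tfrac12 b'c'\sin A'$ and split
\[
\mathcal A'-\mathcal A=\tfrac12 b'c'(\sin A'-\sin A)+\tfrac12(b'c'-bc)\sin A.
\]
The second term contributes a relative error $\le(1+\delta)^2-1\le 3\delta$, while for the first I would use $|\sin A'-\sin A|\le|A'-A|\le\frac{24}{\epsilon}\delta$ together with $1/\sin A\le1/\sin\epsilon$ and $b'c'\le(1+\delta)^2bc$; combining gives the relative bound $\frac{576}{\epsilon^2}\delta$ with room to spare. The main obstacle is the angle estimate rather than the area estimate: everything downstream follows routinely once $|A'-A|\le\frac{24}{\epsilon}\delta$ is in hand, and that bound rests on the cotangent simplification of $dA$ and on executing the bootstrap carefully so that the triangle never degenerates along the path of edge lengths.
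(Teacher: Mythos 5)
The paper does not prove this lemma; it is imported verbatim as Lemma 3.5 of \cite{wu2020convergence}, so there is no in-paper argument to compare against. Judged on its own, your proof is correct and complete. The differential identity checks out: from $a^2=b^2+c^2-2bc\cos A$ and the projection formulas $b-c\cos A=a\cos C$, $c-b\cos A=a\cos B$ one gets exactly $dA=\frac{1}{2\mathcal A}(a\,da-a\cos C\,db-a\cos B\,dc)$, and the coefficient identities $\frac{a^2}{2\mathcal A}=\cot B+\cot C$ and $\frac{ab}{2\mathcal A}=\frac{1}{\sin C}$ give the bound $|dA|\le 2\delta'(|\cot B|+|\cot C|)$ with $\delta'=\delta/(1-\delta)$; with all angles kept $\ge\epsilon/2$ this is at most $8\delta'/\epsilon\le 24\delta/\epsilon$, so the constant closes with room to spare. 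Your bootstrap is the right mechanism: the hypothesis $\delta<\epsilon^2/48$ forces $24\delta/\epsilon<\epsilon/2$, so no angle along the linear path of edge lengths can reach $\epsilon/2$, which simultaneously establishes nondegeneracy (hence that $a',b',c'$ form a triangle) and the angle estimate. The area decomposition then gives roughly $30\delta/\epsilon^2+3\delta\le 576\delta/\epsilon^2$. Two small remarks: the displayed hypothesis $|b'-b|\le\delta a$ is almost certainly a typo for $\delta b$ (it recurs in Lemma \ref{lip}), and your reading is the intended one --- and even taken literally it only perturbs constants, since $a\le b/\sin\epsilon$; second, you correctly identify that the naive termwise bound $a|\cos C|/(2\mathcal A)\le 1/\sin^2\epsilon$ would only yield $\delta/\epsilon^2$, and the cotangent cancellation is genuinely needed for the stated $\delta/\epsilon$ rate.
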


     By the following lemma we show that the linear map between two Euclidean triangles is close to isometry if their corresponding edge lengths are close.
\begin{lemma}
\label{lip}
	Assume $\triangle ABC$ ($\triangle A'B'C'$) is a Euclidean triangle with edge lengths $a,b,c$ (resp. $a',b',c'$), and all the angles in $\triangle ABC$ are at least $\epsilon>0$, and $\delta<\epsilon^2/576$, and
      	$$
      	|a'-a|\leq \delta a,\quad |b'-b|\leq \delta a,\quad |c'-c|\leq \delta c,
      	$$
and $\lambda_1,\lambda_2$ are the two singular values of the unique linear map sending $\triangle ABC$ to $\triangle A'B'C'$ preserving the correspondence of the vertices. Then 
$$ 1 - \frac{10^4}{\epsilon^4}\delta\leq \lambda_i \leq  1 + \frac{10^4}{\epsilon^4}\delta, \quad i = 1, 2.$$
\end{lemma}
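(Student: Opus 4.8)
The plan is to reduce the problem to estimating the entries of an explicit $2\times2$ matrix. Since the singular values of a linear map are unchanged under pre- and post-composition with orthogonal transformations, I would first normalize the two triangles by rigid motions without affecting $\lambda_1,\lambda_2$. Concretely, place $A=A'=0$, put $B=(c,0)$ and $B'=(c',0)$ on the positive $x$-axis, and (composing with a reflection if needed to match orientations) write $C=(b\cos A,\,b\sin A)$ and $C'=(b'\cos A',\,b'\sin A')$, where $A,A'$ denote the angles at $A,A'$. With $A=A'=0$ the map $L$ is genuinely linear and is determined by $LB=B'$ and $LC=C'$. Solving these two vector equations gives $L$ in upper-triangular form, with diagonal entries $c'/c$ and $\dfrac{b'\sin A'}{b\sin A}$ and a single off-diagonal entry $q=\dfrac{b'\cos A'-(c'/c)\,b\cos A}{b\sin A}$.

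The second step is to show $L=I+E$ with operator norm $\|E\|$ of order $\delta/\epsilon^2$. For the top-left entry, $c'/c-1$ is controlled directly by $|c'-c|\le\delta c$. The remaining entries require two inputs: first, the angle estimate $|A'-A|\le\tfrac{24}{\epsilon}\delta$ furnished by Lemma~\ref{comparison lemma for Euclidean triangles} (applicable since $\delta<\epsilon^2/576<\epsilon^2/48$); and second, the fact that in a triangle with all angles at least $\epsilon$ the three edge lengths are mutually comparable up to the factor $1/\sin\epsilon$ by the law of sines, which in particular turns the hypothesis $|b'-b|\le\delta a$ into $|b'-b|\lesssim\delta b/\sin\epsilon$. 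Combining these with $\sin A\ge\sin\epsilon\gtrsim\epsilon$ and the $1$-Lipschitz bounds $|\sin A'-\sin A|,|\cos A'-\cos A|\le|A'-A|$, a routine expansion bounds both $\big|\tfrac{b'\sin A'}{b\sin A}-1\big|$ and $|q|$ by a constant times $\delta/\epsilon^2$.

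Finally, once $L=I+E$ with $\|E\|\le C_0\delta/\epsilon^2$, the conclusion is immediate: for any unit vector $w$ one has $|Lw|=|w+Ew|\in[1-\|E\|,\,1+\|E\|]$, and since the singular values are the maximum and minimum of $|Lw|$ over unit $w$, this gives $1-C_0\delta/\epsilon^2\le\lambda_i\le1+C_0\delta/\epsilon^2$; since all angles being at least $\epsilon$ forces $\epsilon\le\pi/3$, one has $C_0/\epsilon^2\le10^4/\epsilon^4$ and the stated bound follows (with room to spare). The only genuinely delicate point is the second step, namely bounding the off-diagonal entry $q$: its numerator $b'\cos A'-(c'/c)b\cos A$ is a near-cancellation, so one must expand as $(b'\cos A'-b\cos A)+b\cos A\,(1-c'/c)$ to see that the leading terms cancel and the remainder is $O(\delta/\epsilon)\cdot b$; dividing by $b\sin A\gtrsim b\epsilon$ is what produces the $\epsilon^{-2}$ power rather than a worse one. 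Carefully tracking the constants through this expansion is what ultimately yields the explicit $10^4/\epsilon^4$ in the statement.
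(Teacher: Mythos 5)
Your proof is correct, and it takes a genuinely different route from the paper's. The paper never writes down the map: it works with symmetric functions of the singular values, using $\lambda_1\lambda_2=|\triangle A'B'C'|/|\triangle ABC|$ together with the Pinkall--Polthier formula $\lambda_1^2+\lambda_2^2=(a'^2\cot A+b'^2\cot B+c'^2\cot C)/(2|\triangle ABC|)$, and expresses $(\lambda_1-\lambda_2)^2$ as a sum of terms $a'^2(\cot A-\cot A')$, etc. The key point there is a cancellation dual to yours: in the Taylor expansion of $\cot$ the first-order contributions are all equal to $-(2R)^2(A-A')$ by the law of sines, and they cancel because $(A-A')+(B-B')+(C-C')=0$, leaving a second-order quantity; hence $|\lambda_1-\lambda_2|=O(\delta)$, which combined with $|\lambda_1\lambda_2-1|=O(\delta)$ from Lemma \ref{comparison lemma for Euclidean triangles} pins both singular values near $1$. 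Your argument instead puts the map in explicit upper-triangular form and bounds it entrywise as a perturbation of the identity, using only the angle estimate of Lemma \ref{comparison lemma for Euclidean triangles}, the law of sines, and the elementary bound $|\sigma_i(I+E)-1|\le\|E\|$; you correctly identify the one near-cancellation (the off-diagonal entry $q$) and resolve it by the splitting $b'\cos A'-(c'/c)\,b\cos A=(b'\cos A'-b\cos A)+b\cos A\,(1-c'/c)$. The paper's approach buys coordinate-freeness and reuses a formula already needed elsewhere in the discrete-conformal literature; yours is more elementary, makes transparent where each power of $1/\epsilon$ is lost (one from the angle estimate, one from dividing by $\sin A$), and in fact yields the sharper order $O(\delta/\epsilon^2)$ rather than $O(\delta/\epsilon^4)$, so the stated constant $10^4/\epsilon^4$ is met with room to spare once one notes $\epsilon\le\pi/3$.
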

\begin{proof}
By Lemma \ref{comparison lemma for Euclidean triangles}, $|A-A'|,|B-B'|,|C-C'|$ are all less or equal to $24\delta/\epsilon<\epsilon/2$, and thus $A',B',C'$ are all at least $\epsilon/2$. Then again by Lemma \ref{comparison lemma for Euclidean triangles} it is easy to show that 
$$
a'^2\leq\frac{2|\triangle A'B'C'|}{\sin^2(\epsilon/2)}
\leq\frac{64|\triangle ABC|}{\epsilon^2}.
$$
	It is well known that 
	$$
	\lambda_1\lambda_2 = \frac{|\triangle A'B'C'|}{|\triangle ABC|}
	$$ 
	and thus by Lemma \ref{comparison lemma for Euclidean triangles} 
	\begin{equation}
	\label{lambda multiple}
	|\lambda_1\lambda_2-1|<\frac{576}{\epsilon^2}\delta.
	\end{equation}
	In \cite{pinkall1993computing} we can find the formula
	$$\lambda_1^2 + \lambda_2^2 = \frac{a'^2\cot A + b'^2\cot B + c'^2 \cot C}{2|\triangle ABC|}.$$
	Applying this formula to the special case $\triangle A'B'C'=\triangle ABC$, we get
	 $$
	 2=\frac{a'^2\cot A' + b'^2\cot B' + c'^2 \cot C' } {2|\triangle A'B'C'|},
	 $$ 
	 , which implies
	 \begin{equation}
		\label{lambda diff}
\lambda_1^2 + \lambda_2^2 - 2\lambda_1\lambda_2 = \frac{a'^2(\cot A - \cot A') + b'^2(\cot B - \cot B') + c'^2 (\cot C - \cot C')}{2|\triangle ABC|}.
	\end{equation}
	Denote $f(x)=\cot x$, then $f'(x)=-1/\sin^2 x$ and
	$f''(x)=2\cos x/\sin^3 x$.
	By Taylor's expansion, there exists $\xi_A$ between $A$ and $A'$ such that
	$$
	a'^2(\cot A-\cot A')
	=a'^2[f'(A')(A-A')+\frac{1}{2}f''(\xi_A)(A-A')^2]
	$$
	$$=-\frac{a'^2}{\sin^2 A'}(A-A')+\frac{a'^2}{2}f''(\xi_A)(A-A')^2
	=-(2R)^2(A-A')+\frac{a'^2}{2}f''(\xi_A)(A-A')^2
	$$
	where $R$ is the radius of the cicumcircle of $\triangle A'B'C'$, and
	$$
	\left|\frac{a'^2}{2}f''(\xi_A)(A-A')^2\right|
	\leq\frac{64|\triangle ABC|}{\epsilon^2}
\cdot\frac{2}{\sin^3(\epsilon/2)}\cdot\left(\frac{24\delta}{\epsilon}\right)^2
	\leq|\triangle ABC|
	\cdot\frac{10^6\cdot\delta^2}{\epsilon^7}.
	$$
	Combining the similar computation for $B$ and $C$, we get that the right hand side of equation (\ref{lambda diff}) is less or equal to $3\times10^6\delta^2/\epsilon^7$, and thus $|\lambda_1-\lambda_2|\leq
	\sqrt{3\times10^6\delta^2/\epsilon^7}\leq10^4\delta/\epsilon^{4}$. Then by equation (\ref{lambda multiple}) and the fact that $10^4\delta/\epsilon^4\geq576\delta/\epsilon^2$, it is easy to prove that 
$$ 
1 - \frac{10^4}{\epsilon^4}\delta\leq \lambda_i \leq  1 + \frac{10^4}{\epsilon^4}\delta.
$$
\end{proof}

 We also need the following lemma to compare the angle differences between spherical and Euclidean triangles of the same edge lengths. 
     \begin{lemma}[Special case of Lemma 5.3 in \cite{wu2020convergence}]
\label{5.3}
Assume $\triangle ABC$ (resp. $\triangle A'B'C'$) is a Euclidean (resp. spherical) triangle with the edge lengths $a,b,c$, and $diam(\triangle A'B'C')<{\pi}/3$.
Then
$$
|A'-A|\leq  2(a+b+c)^2.
$$
\end{lemma}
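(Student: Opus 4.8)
The plan is to compare the two angles through the \emph{half-angle} formulas rather than the laws of cosines, since this sidesteps the central difficulty: a direct comparison of $\cos A'$ with $\cos A$ would have to be divided by $\sin(\text{angle})$ to recover $|A'-A|$, and that denominator degenerates when the angle is close to $0$ or $\pi$. Let $\sigma=(a+b+c)/2$. The Euclidean and spherical half-angle formulas read
\begin{equation*}
\tan^2\frac{A}{2}=\frac{(\sigma-b)(\sigma-c)}{\sigma(\sigma-a)},\qquad
\tan^2\frac{A'}{2}=\frac{\sin(\sigma-b)\,\sin(\sigma-c)}{\sin\sigma\,\sin(\sigma-a)}.
\end{equation*}
The hypothesis $\mathrm{diam}(\triangle A'B'C')<\pi/3$ forces $a,b,c<\pi/3$, so each of $\sigma,\ \sigma-a,\ \sigma-b,\ \sigma-c$ is positive (by the triangle inequality) and bounded above by $\sigma<\pi/2$; this keeps every sine positive and uniformly controlled.

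First I would divide the two displayed formulas, which telescopes into a product of four factors of the form $\sin u/u$:
\begin{equation*}
\frac{\tan^2(A'/2)}{\tan^2(A/2)}
=\frac{\sin(\sigma-b)}{\sigma-b}\cdot\frac{\sin(\sigma-c)}{\sigma-c}\cdot\frac{\sigma}{\sin\sigma}\cdot\frac{\sigma-a}{\sin(\sigma-a)}.
\end{equation*}
Taking logarithms reduces everything to estimating $\bigl|\log(\sin u/u)\bigr|$ for $u\in\{\sigma,\sigma-a,\sigma-b,\sigma-c\}$. For $u\in(0,\pi/2)$ one has $\sin u/u\in(2/\pi,1)$ together with $1-\sin u/u\le u^2/6$, so the elementary inequality $-\log x\le(1-x)/x$ (valid for $x\in(0,1]$) yields $\bigl|\log(\sin u/u)\bigr|\le\tfrac{\pi}{12}u^2\le\tfrac{\pi}{12}\sigma^2$. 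Summing the four contributions bounds $\bigl|\log\tan^2(A'/2)-\log\tan^2(A/2)\bigr|$ by $\tfrac{\pi}{3}\sigma^2=\tfrac{\pi}{12}(a+b+c)^2$.

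The final step converts this multiplicative control on $\tan(A/2)$ into the additive bound on the angle, and this is the one point that genuinely requires care. Setting $g(A)=\log\tan(A/2)$ on $(0,\pi)$, a direct computation gives $g'(A)=1/\sin A\ge1$, so $g$ is a bijection onto $\mathbb{R}$ whose inverse is $1$-Lipschitz; hence
\begin{equation*}
|A'-A|\le|g(A')-g(A)|=\tfrac{1}{2}\bigl|\log\tan^2(A'/2)-\log\tan^2(A/2)\bigr|\le\tfrac{\pi}{24}(a+b+c)^2\le2(a+b+c)^2.
\end{equation*}
It is precisely the bound $g'\ge1$, holding uniformly on all of $(0,\pi)$, that makes the estimate independent of the triangle's shape and robust as $A$ or $A'$ approaches a degenerate value; beyond this I expect no real obstacle, only the bookkeeping of constants. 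The diameter hypothesis enters solely to place the four half-perimeter arguments in $(0,\pi/2)$, where the scalar estimate $\bigl|\log(\sin u/u)\bigr|\le\tfrac{\pi}{12}u^2$ is available.
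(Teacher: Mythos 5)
Your proof is correct. Note first that this paper never proves Lemma \ref{5.3} itself: it imports the statement as a special case of Lemma 5.3 of \cite{wu2020convergence}, so your argument is not an alternative to an in-paper proof but a self-contained replacement for a citation. The details all check out: the two half-angle formulas are the standard Euclidean and spherical ones; $\mathrm{diam}(\triangle A'B'C')<\pi/3$ gives $a,b,c<\pi/3$, hence $\sigma<\pi/2$, and the strict triangle inequality places all four arguments $\sigma,\ \sigma-a,\ \sigma-b,\ \sigma-c$ in $(0,\pi/2)$; the scalar facts $1-\frac{\sin u}{u}\le\frac{u^2}{6}$, $\frac{\sin u}{u}>\frac{2}{\pi}$ on $(0,\pi/2)$, and $-\log x\le\frac{1-x}{x}$ on $(0,1]$ are all valid and combine to $\left|\log\frac{\sin u}{u}\right|\le\frac{\pi}{12}u^2\le\frac{\pi}{12}\sigma^2$; and since $g(A)=\log\tan\frac{A}{2}$ satisfies $g'(A)=\frac{1}{\sin A}\ge 1$ on $(0,\pi)$, the mean value theorem gives $|A'-A|\le|g(A')-g(A)|\le\frac{\pi}{24}(a+b+c)^2\le 2(a+b+c)^2$. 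Your route has a genuine advantage over the more obvious attack via comparing the two laws of cosines: that approach yields $|\cos A'-\cos A|=O\big((a+b+c)^2\big)$ and must then divide by the sine of an intermediate angle to recover $|A'-A|$, a denominator that degenerates as the angle approaches $0$ or $\pi$ and forces extra case analysis; your observation that the inverse of $\log\tan(\cdot/2)$ is $1$-Lipschitz on all of $(0,\pi)$ removes this difficulty uniformly in the shape of the triangle, and it produces the explicit constant $\frac{\pi}{24}$, far below the required $2$. One cosmetic point to settle: if the spherical triangle is degenerate (equality in a triangle inequality), the logarithms are formally infinite, so you should either declare triangles nondegenerate or dispose of that case by continuity, noting that then $A$ and $A'$ are simultaneously $0$ or simultaneously $\pi$, so the difference vanishes.
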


\subsection{Geodesic triangulations of surfaces}
 The following cubic estimate quantifies the error between the discrete conformal change and the smooth conformal change. 
    \begin{lemma}[Lemma 4.3 in \cite{wu2020convergence}]
   	\label{cubic estimate}
   	Suppose $(M,g)$ is a  closed Riemannian surface, and $ u\in C^{\infty}(M)$ is a conformal factor. Then there exists $C=C(M,g,u)>0$ such that{} for any $x,y\in M$,
   	$$
   	|d_{e^{2 u}g}(x,y)-e^{\frac{1}{2}(u(x)+ u(y))}d_g(x,y)|\leq Cd_g(x,y)^3.
   	$$
   \end{lemma}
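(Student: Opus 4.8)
The plan is to reduce to the regime where $x$ and $y$ are close, and there compare the two distances by using the $g$-geodesic joining them as a competitor curve for the conformal length. First I would dispose of the ``far'' case. Since $M$ is compact and $u$ is smooth, $e^{u}$ is bounded between two positive constants, so $d_g$ and $d_{e^{2u}g}$ are uniformly comparable and both globally bounded. Hence for $d_g(x,y)$ bounded below by any fixed $r_0>0$ the left-hand side is bounded while the right-hand side satisfies $C d_g(x,y)^3\geq Cr_0^3$, so the inequality holds once $C$ is large. It therefore suffices to treat pairs with $r:=d_g(x,y)$ smaller than, say, the injectivity radius of $(M,g)$, where a unique length-minimizing $g$-geodesic $\gamma$ joins $x$ to $y$.

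For the upper bound I would estimate $d_{e^{2u}g}(x,y)$ from above by the $e^{2u}g$-length of $\gamma$. Parametrizing $\gamma$ by $g$-arclength on $[0,r]$ and writing $f(s)=u(\gamma(s))$, this length is $\int_0^r e^{f(s)}\,ds$, and $d_{e^{2u}g}(x,y)\leq \int_0^r e^{f(s)}\,ds$. Because $\gamma$ is a $g$-geodesic one has $f''(s)=\mathrm{Hess}_g\,u(\gamma',\gamma')$, so $|f'|$ and $|f''|$ are bounded uniformly on $M$ in terms of $\|u\|_{C^2}$ and the geometry of $g$. The key is a Taylor expansion about the midpoint $m=r/2$: the linear term integrates to zero by symmetry, giving $\int_0^r e^{f(s)}\,ds = r\,e^{f(m)}+O(r^3)$; and the linear terms also cancel in the symmetric average, giving $\tfrac12\big(f(0)+f(r)\big)=f(m)+O(r^2)$ and hence $r\,e^{\frac12(f(0)+f(r))}=r\,e^{f(m)}+O(r^3)$. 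Both quantities thus reduce to $r\,e^{f(m)}$ with cubic error, so their difference is $O(r^3)$, yielding $d_{e^{2u}g}(x,y)\leq e^{\frac12(u(x)+u(y))}d_g(x,y)+Cr^3$.

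For the matching lower bound I would exploit the symmetry of the statement under exchanging $g$ and $\tilde g:=e^{2u}g$. The metric $g$ is the conformal change of $\tilde g$ with factor $-u$, so the upper-bound estimate just obtained, applied to the pair $(\tilde g,-u)$ with the $\tilde g$-minimizing geodesic as competitor, gives $d_g(x,y)\leq e^{-\frac12(u(x)+u(y))}d_{\tilde g}(x,y)+C' d_{\tilde g}(x,y)^3$. Rearranging and using the uniform comparability $d_{\tilde g}(x,y)\leq C'' d_g(x,y)$ converts the $O(d_{\tilde g}^3)$ error into $O(d_g^3)$, producing $d_{\tilde g}(x,y)\geq e^{\frac12(u(x)+u(y))}d_g(x,y)-C''' r^3$. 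Combining the two one-sided bounds finishes the proof.

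I expect the main obstacle to be bookkeeping the cubic cancellation cleanly. A naive estimate replacing $u$ along the geodesic by its endpoint average only controls the integrand to first order and leaves an $O(r^2)$ error; to reach cubic order one must retain the second-order Taylor data and invoke \emph{both} the vanishing of the odd moment $\int_{-r/2}^{r/2} t\,dt=0$ and the precise form of the symmetric endpoint average, so that the surviving $O(r)$ discrepancy drops out. A secondary technical point is the uniformity of all constants in $x,y$, which follows from compactness of $M$ (bounding the injectivity radius below and $\|u\|_{C^2}$ above) together with the metric comparability used in the lower bound.
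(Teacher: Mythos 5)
Your argument is correct: the reduction to the small-distance regime by compactness, the use of the $g$-geodesic as a competitor with a second-order Taylor expansion about its midpoint (so that the odd terms cancel against the symmetric endpoint average), and the swap $g\leftrightarrow e^{2u}g$, $u\leftrightarrow -u$ for the reverse inequality all hold up, with all constants uniform by compactness of $M$. Note that this paper does not prove the lemma itself but imports it as Lemma 4.3 of \cite{wu2020convergence}; your proof is essentially the same standard argument used there, so there is nothing to flag.
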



The following lemma shows that a smooth conformal change will preserve the existence of the geodesic triangulation, and its $\epsilon$-regularity.
\begin{lemma}[Part (a) of Lemma 4.4 in \cite{wu2020convergence}]
		\label{geodesic representative and isoperimetry}
Suppose $(M, g)$ is a closed Riemannian surface, and $T$ is a geodesic triangulation
of $(M, g)$. Let $l\in\mathbb{R}^{E(T)}$ denote the geodesic lengths of the edges of $T$, and assume $(T,l)_{E}$ is $\epsilon$-regular.  Given a conformal factor $u\in C^{\infty}(M)$, there exits a constant $\delta=\delta(M,g,u,\epsilon)$ such that if $|l|\leq\delta$ then there exists a geodesic triangulation $T'$ in $(M,e^{2u}g)$ such that $T'$ is homotopic to $T$ relative to $V(T)=V(T')$. 
Further $(T',\bar{l})_E$ is $(\epsilon/2)$-regular, where $\bar{l}\in \mathbb{R}^{E(T')}$ denotes the geodesic lengths of the edges of $T'$ in $(M, e^{2u}g)$.
\end{lemma}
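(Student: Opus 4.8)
The plan is to split the statement into two essentially independent tasks: first, the \emph{construction} of a geodesic triangulation $T'$ of $(M,e^{2u}g)$ sharing the vertex set $V(T)$ and lying in the prescribed homotopy class; and second, the \emph{regularity estimate} showing $(T',\bar l)_E$ is $(\epsilon/2)$-regular. The second task is a perturbation argument powered by the cubic estimate (Lemma \ref{cubic estimate}) and the angle comparison lemma (Lemma \ref{comparison lemma for Euclidean triangles}); the first task is the genuinely geometric step.

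For the existence of $T'$, I would use that $M$ is compact, so $g$ and the conformally equivalent metric $e^{2u}g$ are uniformly bi-Lipschitz (since $u$ is bounded) and admit a common positive convexity radius $\rho=\rho(M,g,u)$. Choosing $\delta$ so that $|l|\leq\delta$ forces each edge, and in fact each triangle, of $T$ into a geodesically convex ball of radius $<\rho$ in both metrics, I replace each edge $ij$ by the unique minimizing $e^{2u}g$-geodesic joining $i$ and $j$ inside such a ball; being unique in a contractible neighborhood, it is automatically homotopic rel endpoints to the original edge. It then remains to verify that these new edges are embedded and bound a simplicial complex homeomorphic to $M$. This follows from a perturbation argument: in $g$-normal coordinates on a ball containing a given triangle, both geodesic flows are $C^1$-close to the Euclidean straight-line flow on the scale $|l|$, so each new edge is $C^1$-close to its $g$-geodesic counterpart, and since the $\epsilon$-regular triangulation $T$ has uniformly non-degenerate triangles, no crossings of non-adjacent edges or orientation reversals can occur once $|l|$ is small enough.

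For the regularity estimate, I would first apply the cubic estimate edge by edge to get $\bar l_{ij}=d_{e^{2u}g}(i,j)=e^{\frac{1}{2}(u(i)+u(j))}l_{ij}+O(l_{ij}^3)$. Fixing a triangle $\triangle ijk$ and factoring out the common scale $e^{u(i)}$, the smoothness of $u$ gives a Lipschitz bound $|u(j)-u(i)|\leq L\,d_g(i,j)\leq L|l|$ with $L=L(M,g,u)$, so that $e^{-u(i)}\bar l_{ij}=l_{ij}(1+O(|l|))$, and likewise for the other two edges. Hence the rescaled new triangle has edge lengths differing from those of $\triangle ijk$ by a relative error $O(|l|)$, and since rescaling preserves angles, Lemma \ref{comparison lemma for Euclidean triangles} applies with $\delta'=O(|l|)$: for $|l|$ small, $\delta'<\epsilon^2/48$, and every inner angle of $T'$ differs from the corresponding angle of $T$ by at most $\tfrac{24}{\epsilon}\delta'=O(|l|/\epsilon)$. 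Because the angles of $T$ are $\geq\epsilon$ and its Delaunay sums satisfy $\theta^k_{ij}+\theta^{k'}_{ij}\leq\pi-\epsilon$, shrinking $\delta$ further makes all angle perturbations $<\epsilon/2$, giving both conditions of $(\epsilon/2)$-regularity.

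I expect the main obstacle to be the first task: certifying that the individually well-defined new geodesic edges assemble into a genuine embedded triangulation in the correct homotopy class. While each edge is harmless inside a convex ball, excluding global pathologies uniformly over all vertices of $M$ requires combining the quantitative non-degeneracy from $\epsilon$-regularity with a uniform $C^1$-closeness of the two geodesic flows at scale $|l|$; packaging this into one clean perturbation statement is the delicate point. The length and angle estimates, by contrast, are routine once the cubic estimate is available.
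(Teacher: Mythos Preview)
The paper does not supply its own proof of this lemma; it is imported wholesale as Part (a) of Lemma 4.4 in \cite{wu2020convergence}, so there is nothing in the present paper to compare your argument against.

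That said, your outline is the standard route and almost certainly what the cited reference does. The regularity half is essentially complete as written: the cubic estimate (Lemma \ref{cubic estimate}) gives $\bar l_{ij}=e^{\frac12(u(i)+u(j))}l_{ij}(1+O(l_{ij}^2))$, a Lipschitz bound on $u$ lets you factor out $e^{u(i)}$ with relative error $O(|l|)$, and since in an $\epsilon$-regular triangle the side ratios are bounded by $1/\sin\epsilon$, Lemma \ref{comparison lemma for Euclidean triangles} applies with $\delta'=O(|l|)$ to give the $(\epsilon/2)$-regularity. For the existence half, your plan (common convexity radius, unique minimizing geodesics in convex balls, $C^1$-closeness of the two geodesic flows in normal coordinates, $\epsilon$-regularity to exclude edge crossings) is the right framework; you have correctly identified that turning the local closeness into a global embedding statement is the only point requiring real care.
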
 


The following lemma shows that an $\epsilon$-regular geodesic triangulation of a closed surface satisfies the $C$-isoperimetric condition if the edge lengths are sufficiently small. 
 \begin{lemma}[Part (b) of Lemma 4.4 in \cite{wu2020convergence}]
 \label{isoperimetric}
 	Suppose $(M,g)$ is a closed Riemannian surface, and $T$ is a geodesic triangulation of $(M, g)$ with geodesic length $l$ such that $(T, l)_E$ is $\epsilon$-regular. Then there exists a constant $\delta = \delta(M, g, \epsilon)$ such that if  $|l|<\delta$, $(T, l)$ is $C$-isoperimetric for some constant $C = C(M, g, \epsilon)>0$.
 \end{lemma}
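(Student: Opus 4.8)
The plan is to deduce the discrete isoperimetric inequality from the classical isoperimetric inequality on the closed surface $(M,g)$: there is a constant $C_0=C_0(M,g)$ such that for every subset $\Omega\subset M$ of finite perimeter,
$$
\min\{\mathrm{Area}_g(\Omega),\,\mathrm{Area}_g(M\setminus\Omega)\}\le C_0\,\mathrm{Length}_g(\partial\Omega)^2.
$$
This follows from the local Euclidean-type isoperimetric inequality for small regions together with a positive Cheeger constant for regions whose area is bounded away from $0$, both of which hold on a smooth compact surface. Given $V_0\subset V(T)$ I would associate to it the region $\Omega=\bigcup\{\triangle ijk:i,j,k\in V_0\}$, the union of all triangles whose three vertices lie in $V_0$; its boundary $\partial\Omega$ is piecewise geodesic, hence rectifiable, so the inequality applies. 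The whole argument then rests on comparing the geometric quantities $\mathrm{Area}_g(\Omega)$ and $\mathrm{Length}_g(\partial\Omega)$ with the combinatorial quantities $|V_0|_l$ and $|\partial V_0|_l$.

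First I would record the consequences of $\epsilon$-regularity and $|l|\le\delta$ that make such comparisons possible. Because $(T,l)_E$ is $\epsilon$-regular, within each triangle the law of sines bounds the ratio of any two edge lengths by $1/\sin\epsilon$, and the Euclidean area of the triangle is comparable to the square of any of its edges, with constants depending only on $\epsilon$. Since $|l|$ is small and $g$ is smooth, each geodesic triangle of $(M,g)$ is uniformly close to its Euclidean model, so $\mathrm{Area}_g(\triangle ijk)$ is likewise comparable to the square of any edge of $\triangle ijk$; one also gets a bound $4\pi/\epsilon$ on the valence of every vertex, since the geodesic angles at an interior vertex sum to $2\pi$ and each is at least $\epsilon-O(|l|^2)\ge\epsilon/2$. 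I will also use the purely combinatorial fact that each edge is shared by at most two triangles. These facts supply all the comparison constants and depend only on $M$, $g$, and $\epsilon$.

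With these in hand the comparison is a matter of careful edge accounting. For the perimeter: an edge $ij$ lies on $\partial\Omega$ only if $i,j\in V_0$ while the third vertex $k'$ of one of its two adjacent triangles lies outside $V_0$; that triangle $\triangle ijk'$ then has two crossing edges $ik',jk'\in\partial V_0$ of length comparable to $l_{ij}$, and since each crossing edge is charged at most four times (it lies in at most two triangles), this yields $\mathrm{Length}_g(\partial\Omega)\le C|\partial V_0|_l$. For the area I split the internal edges of $V_0$ into those that belong to some triangle of $\Omega$ and the ``isolated'' ones whose two adjacent triangles both reach outside $V_0$. The first group is controlled by $\sum_e l_e^2\le C\,\mathrm{Area}_g(\Omega)$, using area $\asymp$ edge$^2$ and that each triangle is counted once; each isolated edge is surrounded by crossing edges of comparable length, so from $\sum_e l_e^2\le(\sum_e l_e)^2$ their total contribution is at most $C|\partial V_0|_l^2$. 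Together this gives $|V_0|_l\le C\,\mathrm{Area}_g(\Omega)+C|\partial V_0|_l^2$, and the same estimate applied to $V_0^c$, combined with the identity $|V|_l-|V_0|_l=|V_0^c|_l+\sum_{\partial V_0}l_e^2$ and the bound $\sum_{\partial V_0}l_e^2\le|\partial V_0|_l^2$, gives $|V|_l-|V_0|_l\le C\,\mathrm{Area}_g(M\setminus\Omega)+C'|\partial V_0|_l^2$ (noting $\mathrm{Area}_g(\Omega')\le\mathrm{Area}_g(M\setminus\Omega)$ for the fully-$V_0^c$ region $\Omega'$). Applying the continuous inequality to $\Omega$ and splitting into the two cases according to whether $\mathrm{Area}_g(\Omega)$ or $\mathrm{Area}_g(M\setminus\Omega)$ is the smaller, one of $|V_0|_l$, $|V|_l-|V_0|_l$ is bounded by $C|\partial V_0|_l^2$ in each case, which is precisely the $C$-isoperimetric inequality.

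I expect the main obstacle to be exactly this two-sided translation between the combinatorial and geometric quantities: in particular, handling the isolated internal edges, which contribute to $|V_0|_l$ but not to $\mathrm{Area}_g(\Omega)$, and verifying that every comparison constant depends only on $(M,g,\epsilon)$ and not on the particular triangulation $T$. Invoking the continuous inequality in the correct ``minimum of the two areas'' form, and confirming that $\partial\Omega$ is rectifiable so that it applies, are the remaining points that need care.
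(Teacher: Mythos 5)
The paper does not prove this lemma at all; it is quoted verbatim from Lemma 4.4(b) of \cite{wu2020convergence}, and the proof there follows the same strategy you propose: compare $|V_0|_l$ and $|\partial V_0|_l$ with the Riemannian area and perimeter of the union of triangles spanned by $V_0$, and invoke the smooth isoperimetric inequality of the closed surface. Your bookkeeping is sound --- the $\epsilon$-regularity comparisons of edge lengths, areas and valences, the charging of each boundary edge of $\Omega$ to the two crossing edges of its non-$\Omega$ triangle, the separate treatment of the isolated internal edges via $\sum_e l_e^2\le\bigl(\sum_e l_e\bigr)^2$, and the two-case use of $\min\{\mathrm{Area}(\Omega),\mathrm{Area}(M\setminus\Omega)\}\le C_0\,\mathrm{Length}(\partial\Omega)^2$ --- so the argument goes through.
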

 
However, what we really need is the following modified version of Lemma \ref{isoperimetric}.
\begin{lemma}
\label{isoperimetric2}
 	Suppose $(M,g)$ is a closed Riemannian surface, and $T$ is a geodesic triangulation of $(M, g)$ with geodesic length $l$ such that $(T, l)_E$ is $\epsilon$-regular. 
 	Assume $v\in V$ and $star(v)\subset V$ contains $v$ and its neighbors in $T$, and let $\hat{V}=V-star(v)$ and
 	$G=(\hat V,\hat E)$ be the subgraph of $(V(T),E(T))$ generated by $\hat{V}$.
 	Then there exists a constant $\delta = \delta(M, g, \epsilon)$ such that if  $|l|<\delta$, $(\hat T, l|_{\hat E})$ is $C$-isoperimetric for some constant $C = C(M, g, \epsilon)>0$.
\end{lemma}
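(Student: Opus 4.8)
The plan is to derive Lemma~\ref{isoperimetric2} from Lemma~\ref{isoperimetric} by showing that removing the closed star of a single vertex $v$ changes the isoperimetric constant only by a controlled factor. First I would observe that $(M,g)$ being a closed $\epsilon$-regular geodesic triangulation, Lemma~\ref{isoperimetric} already gives us a constant $C_0 = C_0(M,g,\epsilon)$ such that the full pair $(T, l)$ is $C_0$-isoperimetric, once $|l| < \delta_0$ for a suitable $\delta_0$. The goal is then to transfer this to the subgraph $G = (\hat V, \hat E)$ where $\hat V = V - star(v)$. The key quantitative input is that $\epsilon$-regularity bounds the vertex degree of $v$ by $\lfloor 2\pi/\epsilon\rfloor$ (as already used in the proof of Lemma~\ref{regular for stereographic condition}), so $star(v)$ consists of at most $1 + \lfloor 2\pi/\epsilon\rfloor$ vertices, and the number of edges incident to $star(v)$ is likewise bounded by a constant depending only on $\epsilon$.

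The central step is a comparison of the perimeter and area quantities between $T$ and $\hat T$. For any $\hat V_0 \subseteq \hat V$, I would compare $|\hat V_0|_{l|_{\hat E}}$ and $|\partial_{\hat G} \hat V_0|_{l|_{\hat E}}$ with the corresponding quantities $|\hat V_0|_l$ and $|\partial_G \hat V_0|_l$ computed in the ambient graph. The $l$-area only loses the contributions of edges inside $\hat V_0$ that also touch $star(v)$, and the $l$-perimeter boundary set can differ only by edges incident to $star(v)$; both discrepancies are sums over at most a constant ($\epsilon$-dependent) number of edges, each of length at most $|l|$. Applying the ambient $C_0$-isoperimetric inequality to $\hat V_0$ as a subset of $V$, and bounding the finitely many correction terms, I expect to obtain $\min\{|\hat V_0|_{l|_{\hat E}}, |\hat V|_{l|_{\hat E}} - |\hat V_0|_{l|_{\hat E}}\} \leq C |\partial_{\hat G}\hat V_0|_{l|_{\hat E}}^2$ for a new constant $C = C(M,g,\epsilon)$, at least once $|l|$ is small enough.

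The main obstacle I anticipate is the degenerate or near-degenerate case: when $\hat V_0$ is very small (for instance a single vertex), or when $\partial_{\hat G}\hat V_0$ is tiny because $\hat V_0$ sits in a pocket created by deleting $star(v)$. In the ambient graph the isoperimetric bound controls $\min\{|\hat V_0|_l, |V|_l - |\hat V_0|_l\}$, but after deletion the complement is taken within $\hat V$ rather than $V$, so I must verify that $|\hat V|_l$ differs from $|V|_l$ only by the bounded area contribution of $star(v)$, and check that this does not spoil the ``min'' on the left-hand side — in particular that a set which is ``large'' in $\hat V$ but ``small'' in $V$ (or vice versa) is still controlled. I would handle this by treating the two cases $|\hat V_0|_l \leq \tfrac{1}{2}|V|_l$ and $|\hat V_0|_l > \tfrac{1}{2}|V|_l$ separately, using in the second case that the boundary edges cut off by $star(v)$ are few and short, so that the boundary perimeter $|\partial_{\hat G}\hat V_0|$ cannot be much smaller than $|\partial_G \hat V_0|$ minus a constant-times-$|l|$ term.

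Finally I would collect the constants: the degree bound and the edge-count bound depend only on $\epsilon$, the length bound $|l| \leq \delta$ lets us absorb the finitely many correction terms into the isoperimetric constant, and Lemma~\ref{isoperimetric} supplies the base inequality. Choosing $\delta = \min\{\delta_0, \delta_1(M,g,\epsilon)\}$ small enough that the corrections are dominated, I expect to conclude that $(\hat T, l|_{\hat E})$ is $C$-isoperimetric with $C = C(M,g,\epsilon)$, completing the proof.
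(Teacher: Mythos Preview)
Your reduction to Lemma~\ref{isoperimetric} and your use of the degree bound from $\epsilon$-regularity are both correct and match the paper's setup. The gap is in how you handle the perimeter correction. You write that $|\partial_{\hat G}\hat V_0|$ ``cannot be much smaller than $|\partial_G \hat V_0|$ minus a constant-times-$|l|$ term'' and then plan to absorb this additive error by taking $\delta$ small. But this does not work: the relevant quantities $|\hat V_0|_l$, $|\partial_{\hat G}\hat V_0|_l^2$, and the correction term squared are all homogeneous of order $|l|^2$, so shrinking $|l|$ rescales them together and nothing is absorbed. Concretely, if $\hat V_0$ consists of a few vertices sitting against the deleted star, you may have $|\partial_{\hat G}\hat V_0|_l$ of order a single edge length while the edges from $\hat V_0$ into $star(v)$ contribute a comparable amount to $|\partial_G\hat V_0|_l$; an additive bound of the form $|\partial_G\hat V_0|_l\le |\partial_{\hat G}\hat V_0|_l + C_\epsilon|l|$ then gives no control on the ratio $|\partial_G\hat V_0|_l^2/|\partial_{\hat G}\hat V_0|_l^2$. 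Your proposed dichotomy on whether $|\hat V_0|_l\le\tfrac12|V|_l$ does not address this, since the bad case can occur on either side.

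The paper closes this gap with a different case split, based on how $V_0$ meets the set $B\subset\hat V$ of vertices adjacent to $star(v)$. If $V_0\cap B=\emptyset$ there is no correction at all; if $B\subset V_0$ one applies the ambient isoperimetric inequality to $V_0\cup star(v)$ instead, whose $T$-boundary equals $\hat\partial V_0$ exactly. The delicate case is $\emptyset\ne V_0\cap B\subsetneq B$: here one uses that $B$ is connected (a consequence of the $4$-vertex-connectedness hypothesis on the $1$-skeleton) to find an edge $ij\in\hat\partial V_0$ with both endpoints in $B$. By $\epsilon$-regularity all edges incident to $star(v)$ have length bounded by $C_1 l_{ij}$, which yields the \emph{multiplicative} estimate $|\partial_G V_0|_l\le (1+C_1)|\partial_{\hat G}V_0|_l$ needed to transfer the inequality. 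This is the missing idea in your outline.
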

\begin{proof}
By Lemma \ref{isoperimetric}, we can find constants $\delta(M,g,\epsilon)>0$ and $C(M,g,\epsilon)>0$ such that $(T,l)$ is $C$-isoperimetric if $|l|<\delta$.
Now assume $B=\{i\in\hat V:\exists j\in V-\hat V\text{ s.t. }ij\in E\}$ is the set of  boundary vertices of $G$ in $T$, and $V_0\subset \hat V$, and $\hat\partial V_0$ (resp. $\partial V$) is the boundary of $V_0$ in $G$ (resp. $T$). We consider the following three cases:

\textit{Case 1}: $V_0 \cap B = \emptyset$. Then $ |\hat\partial V_0|_l = |\partial V_0|_l$ and $|V|_l - |V_0|_l \geq  |\hat{V}|_l -|V_0 |_l$. Since $(T,l)$ is $C$-isoperimetric, we have 
$$C |\hat\partial V_0|_l^2 \geq  \min \{ |V_0|_l, |\hat{V}|_l - |V_0|_l\}.$$

\textit{Case 2}: $B \subset V_0$. In this case, $\hat\partial V_0 = \partial (V_0 \cup star(v))$. Since $(T, l)$ is $C$-isoperimetric, we have 
$$C |\hat\partial V_0|_l^2 = C|\partial (V_0\cup star(v))|^2_l \geq \min \{|V_0\cup star(v)|_l, |V|_l - |V_0 \cup star(v)|_l\}$$
Clearly, $ |V_0\cup star(v)|_l \geq |V_0|_l$, and $|V|_l - |V_0 \cup star(v)|_l = |\hat{V}|_l - |V_0|_l$.
Then 
$$C |\hat\partial V_0|_l^2 \geq  \min \{ |V_0|_l, |\hat{V}|_l - |V_0|_l\}.$$

\textit{Case 3}:  $V_0 \cap B \neq \emptyset$ and $ B \not\subset V_0$. It is not difficult to show that $B$ is connected in $V$ since the $1$-skeleton of $T$ is $4$-vertex-connected, so there is an edge $ij\in \hat\partial V_0$ such that $i\in B\cap V_0$ and $j\in B-V_0$. By the $\epsilon$-regularity, the degree of each vertex in $T$ is bounded by $\lfloor2\pi/\epsilon\rfloor$ if $\delta(M,g,\epsilon)$ is sufficiently small, and the ratio of the two edge lengths in a triangle of $T$ is at least $\sin\epsilon$.
So there is a constant $C_1(M,g,\epsilon)>0$ such that $$
C_1l_{ij}\geq\sum_{xy\in E(T)-\hat E}l_{xy}\geq|\partial V_0|_l-|\hat\partial V_0|_l.
$$
Then  
$$C(1+C_1)^2|\hat\partial V_0|_l^2 \geq 
C|\partial V_0|_l^2 \geq 
\min \{ |V_0|_l, |\hat{V}|_l - |V_0|_l\}.$$

\end{proof}

\section{Proof of the Main Theorem \ref{main2}}

\label{proof of the main theorem}
Assume $\epsilon>0$ is a fixed constant and $(T,l)_S$ is $\epsilon$-regular and $|l|<\delta$ where 
$$
\delta=\delta(M,g,X,Y,Z,\epsilon)>0
$$ 
is a sufficiently small constant to be determined.
By Lemma \ref{5.3} we may assume that $(T, l)_E$ is $(\epsilon/2)$-regular.
By Lemma \ref{geodesic representative and isoperimetry}, we may assume that there exists a geodesic triangulation $T'$ of $(M,e^{2\bar u}g)$ such that $T'$ is homotopic to $T$ relative to $V(T)=V(T')$. 
Denote $\bar l\in\mathbb R^{E(T)}\cong\mathbb R^{E(T')}$ as the geodesic edge length of $T'$ in $(M,e^{2\bar u}g)$, and then $(T,\bar l)_S$ is isometric to the unit sphere $(M,e^{2\bar u}g)$ and has zero discrete curvatures. 
Again by Lemma \ref{geodesic representative and isoperimetry} we may assume that $(T,\bar l)_E$ is $(\epsilon/4)$-regular. Then by Lemma \ref{5.3} we may assume $(T,\bar l)_S$ is $(\epsilon/5)$-regular, and thus is strictly Delaunay, and then by Proposition \ref{equivalence} $p(T_P)=\phi(T')$ where $P\in\mathcal P$ is the boundary of the convex hull of $\phi(V(T))$. 
By Lemma \ref{injectivity}, $Q=p_N(\mathring P)$ is a convex polygon, and $T_Q=p_N(\mathring T_P)$ is a geodesic triangulation of $Q$. 
Denote $l_Q\in\mathbb R^{E(T_Q)}$ as the edge lengths in $Q$, and then by Lemma \ref{regular for stereographic condition} there exists a constant $\epsilon'(M,g,X,Y,Z,\epsilon)>0$ such that
$(T_Q,l_Q)_E$ is $\epsilon'$-regular and $K_i\geq\epsilon'$ for any boundary vertex in $(T_Q,l_Q)_E$.
 The combinatorial structures of $T$, $T'$, $T_P$ and $p(T_P)=\phi(T')$ are naturally identified. We also identify the combinatorial structures of $\mathring T_P$ and $T_Q$ and just denote it as $\mathring T$.
Denote $l_P=2\sin(\bar l/2)\in\mathbb R^{E(T)}$ as the edge length of $T_P$ on $P$, and then by equation (\ref{discrete factor for projection}) on $\mathring T$ we have
	\begin{equation*}
	l_Q=w*l_P\quad\text{where}\quad
	w_{i}=\log\frac{2}{|\phi(i)-N|_2^2}=\log\frac{|p_{N}(\phi(i))|_2^2+1}{2}, \quad \forall i\in V(\mathring T).
	\end{equation*}
Denote $l_P'=\bar u*2\sin(l/2)\in\mathbb R^{E(T)}$ and $l_Q'=w*l_P'=(\bar u+w)*2\sin(l/2)\in\mathbb R^{E(\mathring T)}$, and $K(u)\in\mathbb R^{V(\mathring T)}$ as the discrete curvature in $(\mathring T,u*2\sin(l/2))_E$.


In the following proof, for simplicity we will use the notation $a=O(b)$ to represent that if $\delta(M,g,X,Y,Z,\epsilon)$ is sufficiently small, 
then $|a|\leq C b$ for some constant $C(M,g,X,Y,Z,\epsilon)>0$. 
We summarize the remaining part of the proof in three steps: 
 \begin{enumerate}
 	\item [(a)] Estimate the curvature $K(\bar u+w)$ of $(\mathring T,l_Q')_E$ for interior vertices.
 	\item [(b)] Construct a flow $u(t): [0,1]\to \mathbb{R}^{V(\mathring{T})}$, starting from $u(0) = \bar{u}+w$, such that $u(t)$ linearly eliminates the discrete curvature $K(u)$ for interior vertices, i.e.,
 	\begin{equation*}
 		K_i(u(t))=(1-t)K_i(\bar{u}+w)
 	\end{equation*}
 	for any interior vertex $i$ of $\mathring T$.
 	Furthermore, we will also show that $|u'(t)|=O(|l|)$, and $(\mathring{T},u(1)*2\sin (l/2))_E$ is isometric to a convex polygon in the plane.
 	\item [(c)] After a proper normalization, which is a small perturbation, we use the inverse of the stereographic projection to construct the desired polyhedral surface $P\in\mathcal P$.
 \end{enumerate}
 
\subsection{Step 1: The estimate of curvatures}
 By Lemma \ref{cubic estimate}, 
\begin{equation*}
|\bar l_{ij}-(\bar u*l)_{ij}|=O(l_{ij}^3).
\end{equation*}
Notice the fact that $|x-2\sin (x/2)|\leq 10x^3$ if $|x|<0.01$, so
\begin{equation*}
\label{E1}
|(l_P)_{ij}-(l_P')_{ij}|=O(l_{ij}^3),
\end{equation*}
and
\begin{equation}
\label{difference between stereographic metric and virtual stereographic metric}
 \Bigg|\frac{(l_Q')_{ij}-(l_Q)_{ij}}{(l_Q)_{ij}}\Bigg|
= \Bigg|\frac{(l_P')_{ij}-(l_P)_{ij}}{(l_P)_{ij}}\Bigg|= O(l_{ij}^2).
\end{equation}
Given a triangle $\triangle ijk\in F(\mathring T)$, denote $\theta^i_{jk}(u)$ (\textit{resp.} $\bar\theta^i_{jk}$) as the inner angle at $i$ in $\triangle ijk$ in $(\mathring{T}, u*2\sin(l/2))_E$ (\textit{resp.} $(\mathring{T}, {l_Q})_E$),
and $K_i(u)$ as the discrete curvature at $i$ in $\triangle ijk$ in $(\mathring T,u*2\sin(l/2))_E$.

Since $(\mathring T,l_Q)_E$ is $\epsilon'$-regular, by equation (\ref{difference between stereographic metric and virtual stereographic metric}) and Lemma \ref{comparison lemma for Euclidean triangles}, 
\begin{equation*}
\label{E2}
\alpha^i_{jk}:=\bar\theta^i_{jk}-\theta^i_{jk}(\bar{u} + w)=O(l_{ij}^2).
\end{equation*}
So for sufficiently small $\delta(M,g,\epsilon)$, we have
$$
|\alpha^i_{jk}| \leq \frac{\epsilon'}{4}.$$ 
Then
$(\mathring{T},{l_Q'})_E$ is $(\epsilon'/2)$-regular. 
Since $(\mathring{T},{l_Q})_E$ is globally flat, for any $i\in \text{int}( \mathring{T})$,
$$
\sum_{ijk\in {F}}\bar\theta^i_{jk}=2\pi.
$$
So
\begin{equation*}
\label{flow starting point curvature difference}
K_i(\bar u+w)=2\pi-\sum_{ijk\in F}\theta^i_{jk}(\bar u+w)
=\sum_{ijk\in F}(\bar\theta^i_{jk}-\theta^i_{jk}(\bar u+w))
=\sum_{ijk\in F}\alpha^i_{jk} 
\quad \text{if $i\in \text{int}(\mathring{T})$}.
\end{equation*}
Set $x\in\mathbb R^{E(\mathring T)}_A$ be such that
if $ij$ is an interior edge of $\mathring T$
$$
x_{ij}=\frac{\alpha^i_{jk}-\alpha^j_{ik}}{3}+\frac{\alpha^i_{jk'}-\alpha^j_{ik'}}{3},
$$
where $\triangle ijk$ and $\triangle ijk'$ are adjacent triangles in $\mathring T$, and if $ij$ is a boundary edge of $\mathring T$
$$
x_{ij}=\frac{\alpha^i_{jk}-\alpha^j_{ik}}{3},
$$
where $\triangle ijk$ is a triangle in $\mathring T$.
Then 
\begin{equation}
\label{13}
x_{ij}=O(l_{ij}^2),
\end{equation}
and it is straightforward to verify that for any $i\in \text{int}(\mathring T)$
$$
div(x)_i=\sum_{j:j\sim i}x_{ij}
=\sum_{ijk\in F}\alpha_{jk}^i=
K_i(\bar u + w)
$$
using
$\alpha^i_{jk}+\alpha^j_{ik}+\alpha^k_{ij}=0$.

\subsection{Step 2: The construction of the flow}
Consider the sets defined by
$$
\tilde\Omega=\{u\in\mathbb R^{V(\mathring{T})}: \text{$(\mathring{T},u*2\sin\frac{l}{2})_E$  satisfies the triangle inequality and is strictly Delaunay}\},
$$
 and
$$
\Omega=\{u\in\tilde\Omega:  \text{ $(\mathring{T},u*2\sin\frac{l}{2})_E$ is $\frac{\epsilon'}{4}$-regular},|u-(\bar u+w)|\leq1\}.
$$
Notice that $\tilde\Omega$ is an open domain in $\mathbb R^{V(\mathring{T})}$ and $\Omega$ is a compact subset of $\tilde\Omega$. By the construction,  $(\bar{u}+w)$ is in the interior of $\Omega$, since $(\mathring{T}, {l_Q'})_E$ is $(\epsilon'/2)$-regular. 
Given $u\in\tilde\Omega$ and an interior edge $ij$ in $\mathring T$, denote
$$
\eta_{ij}(u)=\frac{1}{2}(\cot\theta^k_{ij}(u)+\cot\theta^{k'}_{ij}(u))
$$
where $\triangle ijk$ and $\triangle ijk'$ are adjacent triangles in $\mathring T$.
Then for $u\in\Omega$,
\begin{equation}
\label{eta}
2\eta_{ij}(u)=\cot\theta^k_{ij}(u)+\cot\theta^{k'}_{ij}(u)
=\frac{\sin(\theta^k_{ij}(u)+\theta^{k'}_{ij}(u))}{\sin\theta^k_{ij}(u)\sin\theta^{k'}_{ij}(u)} 
\geq\sin(\theta^k_{ij}(u)+\theta^{k'}_{ij}(u))
\geq\sin\frac{\epsilon'}{4}
\end{equation}
for any interior edge $ij$ in $\mathring{T}$.

Consider the following system of differential equations on $\tilde\Omega$,
\begin{align}
\label{ode system}
\frac{\partial K_i}{\partial u}\frac{du}{dt}&=-K_i(\bar u+w)=-div(x)_i, \quad\quad&\text{  $i\in \text{int}(\mathring{T})$}, \\
\frac{du_i}{dt}&=(\log 2-2\log (2\sin\frac{ l_{iX}}{2})-\bar u_X)-(\bar u_i+w_i), \quad\quad&\quad{i\in\text{bdy} (\mathring{T})},  \nonumber \\
u(0) &= \bar{u} + w, &\quad \nonumber
\end{align}
where $\bar{u}_X$ is the value of $\bar{u}$ at the marked point $X$ sent to the north pole, and $l_{iX}$ is the length of the edge $iX$ given by $l$. 
We want to show that the solution $u(t)$ exists on $[0,1]$, then it is easy to see that $K_i(u(1))=0$ for an interior vertex $i$ of $\mathring T$, and 
$$
u_i(1)=\log2-2\log(2\sin\frac{l_{iX}}{2})-\bar u_X
$$
for a boundary vertex $i$ of $\mathring T$.

For a boundary vertex $i$ of $\mathring T$, $u_i(t)$ can be easily solved as
$$
u_i(t)=t(\log2-2\log(2\sin\frac{l_{iX}}{2})-\bar u_X)+(1-t)(\bar u_i+w_i),
$$
and
\begin{align*}
\begin{split}
\frac{du_i}{dt}=&(\log 2-2\log (2\sin\frac{ l_{iX}}{2})-\bar u_X)-(\bar u_i+w_i)
\\
=&\log 2-2\log (2\sin\frac{ l_{iX}}{2})-\bar u_X
-
\bar u_i-\log\frac{2}{(l_P)_{iX}^2}
\\
=&-{2\log (2\sin\frac{l_{iX}}{2})}
-\bar u_X-\bar u_i+2\log{(l_P)_{iX}}
\\
=&-2\log (l_P')_{iX}+2\log (l_P)_{iX}\\
=&O(l_{iX}^2).
\end{split}
\end{align*}


Now let us focus on solving $u_i(t)$ for all the interior vertices of $\mathring T$. Let $\hat{V}$ be the set of  interior vertices of $\mathring T$, and $G=(\hat V,\hat E)$ be the subgraph of $(V(\mathring T),E(\mathring T))$ generated by $\hat V$. It is easy to show that $G$ is nonempty and connected. Let $\hat u\in\mathbb R^{\hat V}$ and $\hat x\in\mathbb R^{\hat E}_A$ and $\hat \eta\in\mathbb R^{\hat E}$
be the restrictions of $u$ and $x$ and $\eta$ respectively on $G=(\hat V,\hat E)$, and $\hat\Delta=\Delta_{\hat\eta}$ be the associated discrete Laplacian on $G$. 
Then by Proposition \ref{curv diff} it is straightforward to verify that 
Equation (\ref{ode system}) can be rewritten as 
\begin{equation}
\label{ode2}
(D-\hat\Delta)\frac{d\hat u}{dt}=-div(\hat x)+y,
\end{equation}
where 

(a) $D\in\mathbb R^{\hat V\times\hat V}$ is a nonzero diagonal matrix and 
$$
D_{ii}=\sum_{j\sim i:j\notin\hat V}\eta_{ij}\geq0,
$$
and

(b)
$$
y_i=\sum_{j\sim i:j\notin \hat V}\eta_{ij}\frac{du_j}{dt}-
\sum_{j\sim i:j\notin \hat V}x_{ij}.
$$
For $u\in\tilde\Omega$, it is easy to show that $(D-\hat\Delta)$ is positive definite, by the fact that $G$ is connected, $\eta_{ij}>0$ for any $ij\in\hat E$, and $D$ is nonzero and non-negative. So equation (\ref{ode2}) locally has a unique solution in $\tilde\Omega$.

Assume the maximum existing open interval for the solution $\hat u(t)\in\Omega$ is $[0,T_0)$ where $0< T_0 \leq +\infty$. 
For $t\in[0,T_0)$, we have
$$
\frac{d\hat u}{dt}(t)=O(|l|\cdot|\hat V|_l^{1/2})
$$
by Lemma \ref{estimate for discrete elliptic operator}, and Lemma \ref{isoperimetric2}, and equation (\ref{eta}) and (\ref{13}), and the fact that

$$
y_i=
\sum_{j\sim i:j\notin \hat V}(\eta_{ij}\frac{du_j}{dt}-
x_{ij})
=O\left(
\sum_{j\sim i:j\notin \hat V}(\eta_{ij} l_{jX}^2+
l_{ij}^2)
\right)
=O(D_{ii}|l|^2)=O(D_{ii}|l|\cdot|\hat V|_l^{1/2}).
$$
Further
\begin{align*}
\begin{split}
&|\hat V|_l\leq|V|_l=\sum_{ij\in E}l_{ij}^2= O(\sum_{ij\in E}\bar l_{ij}^2)
=O(\sum_{ijk\in F}(\bar l_{ij}^2+\bar l_{jk}^2+\bar l_{ik}^2))\\
= &O(\sum_{ijk\in F} Area(\triangle ijk,\bar l)_S)
=O(Area((T,\bar l)_S))=O(Area(\mathbb S^2))=O(1),
\end{split}
\end{align*}
and thus $(du/dt)(t)=O(|l|)$ for $t\in[0,T_0)$.

If $T_0 \leq 1$, combining Lemma \ref{comparison lemma for Euclidean triangles}, we have 
$$
|u(T_0) - (\bar{u} + w)| = O(|l|), \quad \text{and } \quad |\theta_{jk}^i(u(T_0)) - \theta_{jk}^i(\bar{u}+w)| = O(|l|).
$$
This implies that $u(T_0)\in \text{int}(\Omega)$ if $\delta$ is sufficiently small, which contradicts to the maximality of $T_0$. Thus, $T_0> 1$ and $u(1)$ is well-defined. Further we have that

(a) $K_i(u(1))=0$ for any interior vertex $i$ of $\mathring T$, and

(b) $u_i(1)=\log2-2\log(2\sin\frac{l_{iX}}{2})-\bar u_X$ for any boundary vertex $i$ of $\mathring T$, and

(c) $u(1)-(\bar u+w)=O(|l|)$, and

(d) $(\mathring T,u(1)*2\sin\frac{l}{2})_E$ is strictly Delaunay, and 

(e) $K_i(u(1))>0$ for any boundary vertex $i$ in $\mathring T$.

\subsection{Step 3: The normalization and the inverse of the stereographic projection} 
We know that
$(\mathring{T},u(1)*2\sin \frac{l}{2})_E$ is isometric to a closed convex polygon in $\mathbb C$. Let $f$ be the piecewise linear map from $(\mathring{T},u(1)*2\sin \frac{l}{2})_{E}$ to $(\mathring{T},l_Q)_E$ that preserves the triangulation and is linear on each triangle. From equation (\ref{difference between stereographic metric and virtual stereographic metric}) and the fact that $u(1)-(\bar u+w)=O(|l|)$, we can deduce that 
$$
\Big|\frac{(u(1)*2\sin\frac{l}{2})_{ij}-({l_Q})_{ij}}{({l_Q})_{ij}}\Big| = \Big|\frac{(u(1)*2\sin\frac{l}{2})_{ij}-  ((\bar{u}+w)*2\sin\frac{l}{2})_{ij} }{({l_Q})_{ij}}\Big|   + O(|l|^2)= O(|l|).
$$
Then by Lemma \ref{lip}, $\|Df\|_2$ and $\|Df^{-1}\|_2$ are both $(1+C|l|)$-Lipschitz for some constant $C(M,g,X,Y,Z,\epsilon)>0$. So the distance $d_{YZ}$ between $Y$ and $Z$ in $(\mathring{T},u(1)*2\sin (l/2))_E$ lies in $[1-C|l|,1+C|l|]$.
So we can scale $(\mathring{T},u(1)*2\sin (l/2))_E$ by letting $\tilde u=u(1)-\log d_{YZ}$, and then $(\mathring{T},\tilde u*2\sin (l/2))_E$ is still isometric to a convex polygon and the distance between $Y$ and $Z$ is $1$, and 
$$
\Big|\frac{\tilde u*2\sin\frac{l}{2}-{(l_Q)}_{ij}}{{(l_Q)}_{ij}}\Big|= \Big|\frac{\tilde u*2\sin\frac{l}{2}-u(1)*2\sin\frac{l}{2}}{{(l_Q)}_{ij}}\Big| + O(|l|) = O(|l|).
$$
Let $g$ be the isometry from $(\mathring{T},\tilde u*2\sin\frac{l}{2})_E$ to a closed convex polygon $Q_1$  in $\mathbb C$ such that $g(Z)=0$ and $g(Y)=1$. Then for any $i\in \mathring{V}$, the above bi-Lipschitz property of $f$ implies that 
$$
\left|\log\frac{|g(i)|_2}{|p_N(\phi(i))|_2}\right| = O(|l|).
$$

Now we are ready to project the points in the plane back to the sphere. Let 
$$
{V_1}=(p_{N}|_{\mathbb S^2})^{-1}(g({V(\mathring T)}))\cup\{N\}
$$ 
and ${P_1}$ be the convex hull of ${V_1}$. 
Then by part (b) of Lemma \ref{injectivity},
$P_1\in\mathcal P$ and
$p_N(\mathring P_1)=Q_1$ and $p_N(\mathring T_{ P_1})=g(\mathring T)$. Naturally identify the combinatorial structures of $T$ and $T_{P_1}$, and denote $l_{P_1}\in\mathbb R^{E(T)}$ as the edge length on $P_1$. We will verify that 
$$
l_{P_1}=u*2\sin\frac{l}{2}
$$
where $u_X=\bar u_X+\log d_{YZ}$ and 
$$
u_i=\tilde u_i-w_i',\quad\text{where}\quad w_i'=\log\frac{|g(i)|^2+1}{2}
$$
if $i\in V(\mathring T)$.
If $ij\in E(\mathring T)$, 
$$
(l_{P_1})_{ij}=(u*2\sin\frac{l}{2})_{ij}
$$
by implementing Lemma \ref{injectivity} on $l_{P_1}$ and $l_{Q_1}$. For edge $iX\in E(T)$, we have that
\begin{align*}
\begin{split}
&\log(u*2\sin\frac{l}{2})_{iX} \\
=& \log(2\sin \frac{l_{iX}}{2}) +{\frac{1}{2}(\bar{u}_X + \log d_{YZ} + \log 2-2\log (2\sin\frac{ l_{iX}}{2})-\bar{u}_X - \log d_{YZ}}-w_i')
\\
=&  {\frac{1}{2}(\log 2 - w_i'}) 
= \frac{1}{2}\log\frac{4}{|g(i)|^2+1}
=\frac{1}{2}\log (l_{P_1})_{iX}^2=\log (l_{P_1})_{iX}.
\end{split}
\end{align*}
So $u$ is our desired discrete conformal factor. As we mentioned in Remark \ref{unique}, such $u$ is known to be unique.
It remains to show $u_i-\bar u_i=O(|l|)$ for any $i\in V$. Notice that

$$
|w_i'-w_i|=\Big|\log\frac{|g(i)|_2^2+1}{2}-\log\frac{|p_N(\phi(i))|_2^2+1}{2} \Big|=\Big|\log\frac{|g(i)|_2^2+1}{|p_N(\phi(i)|_2^2+1}\Big| = O(|l|).
$$
So restricted on $V(\mathring T)$, we have that
$$
u-\bar u
=(\tilde u-w')-\bar u
=u(1)-\log d_{YZ}-w'-\bar u=(u(1)-\bar u-w)+(w-w')-\log d_{YZ}=O(|l|).
$$
On vertex $X$ we have that
$u_X-\bar u_X=\log d_{YZ}=O(|l|)$.

\bibliographystyle{unsrt}
\bibliography{main}

\begin{thebibliography}{10}

\bibitem{luo2004combinatorial}
Feng Luo.
\newblock Combinatorial yamabe flow on surfaces.
\newblock {\em Communications in Contemporary Mathematics}, 6(05):765--780,
  2004.

\bibitem{bobenko2015discrete}
Alexander~I Bobenko, Ulrich Pinkall, and Boris~A Springborn.
\newblock Discrete conformal maps and ideal hyperbolic polyhedra.
\newblock {\em Geometry \& Topology}, 19(4):2155--2215, 2015.

\bibitem{gu2019convergence}
David Gu, Feng Luo, and Tianqi Wu.
\newblock Convergence of discrete conformal geometry and computation of
  uniformization maps.
\newblock {\em Asian Journal of Mathematics}, 23(1):21--34, 2019.

\bibitem{wu2020convergence}
Tianqi Wu and Xiaoping Zhu.
\newblock The convergence of discrete uniformizations for closed surfaces,
  2020.

\bibitem{rodin1987convergence}
Burt Rodin and Dennis Sullivan.
\newblock The convergence of circle packings to the riemann mapping.
\newblock {\em Journal of Differential Geometry}, 26(2):349--360, 1987.

\bibitem{luo2020discrete}
Feng Luo, Jian Sun, and Tianqi Wu.
\newblock Discrete conformal geometry of polyhedral surfaces and its
  convergence.
\newblock {\em arXiv preprint arXiv:2009.12706}, 2020.

\bibitem{bucking2018c}
Ulrike B{\"u}cking.
\newblock $c^{\infty}$-convergence of conformal mappings for conformally
  equivalent triangular lattices.
\newblock {\em Results in Mathematics}, 73(2):1--21, 2018.

\bibitem{bucking2016approximation}
Ulrike B{\"u}cking.
\newblock Approximation of conformal mappings using conformally equivalent
  triangular lattices.
\newblock In {\em Advances in Discrete Differential Geometry}, pages 133--149.
  Springer, Berlin, Heidelberg, 2016.

\bibitem{springborn2019ideal}
Boris Springborn.
\newblock Ideal hyperbolic polyhedra and discrete uniformization.
\newblock {\em Discrete \& Computational Geometry}, pages 1--46, 2019.

\bibitem{rivin1994intrinsic}
Igor Rivin.
\newblock Intrinsic geometry of convex ideal polyhedra in hyperbolic 3-space.
\newblock {\em Lecture Notes in Pure and Applied Mathematics}, pages 275--291,
  1994.

\bibitem{springborn2008conformal}
Boris Springborn, Peter Schr{\"o}der, and Ulrich Pinkall.
\newblock Conformal equivalence of triangle meshes.
\newblock {\em ACM Transactions on Graphics}, 27(3):Art--No, 2008.

\bibitem{devadoss2011discrete}
Satyan~L Devadoss and Joseph O'Rourke.
\newblock {\em Discrete and computational geometry}.
\newblock Princeton University Press, 2011.

\bibitem{pinkall1993computing}
Ulrich Pinkall and Konrad Polthier.
\newblock Computing discrete minimal surfaces and their conjugates.
\newblock {\em Experimental mathematics}, 2(1):15--36, 1993.

\end{thebibliography}

\end{document}